\theoremstyle{plain}
\newtheorem{theorem}{Theorem}[section]
\newtheorem{corollary}[theorem]{Corollary}
\newtheorem{lemma}[theorem]{Lemma}
\newtheorem{proposition}[theorem]{Proposition}
\theoremstyle{definition}
\theoremstyle{remark}
\newtheorem{remark}[theorem]{Remark}
\newcommand{\norm}[1]{\left\Vert#1\right\Vert}
\newcommand{\abs}[1]{\left\vert#1\right\vert}
\begin{document}
\begin{center}{\bf Nonparametric estimation for fractional\break diffusion processes with random effects}\\
 \small{M. El Omari$^a$,  H. El Maroufy$^a$ and C. Fuchs $^{b,c}$

\small{$^a$ Laboratory of Mathematics and Application, Department of Mathematics, Faculty of sciences and Techniques,  Sultan Moulay Slimane University,  Morocco.}
\\
\small{$^{b}$ Bielefeld University, Faculty of Business Administration and Economics, Bielefeld, Germany.}\\
\small{$^{c}$ Helmholtz Zentrum M\"unchen, German Research Center for Environmental Health GmbH, Institute of Computational Biology, Neuherberg, Germany.}}
\end{center}
\begin{abstract}
We propose a nonparametric estimation for a class of fractional stochastic differential equations (FSDE) with random effects. We precisely consider general linear fractional stochastic differential equations with drift depending on random effects and non-random diffusion. We build ordinary kernel estimators and histogram estimators and study their $L^p-$risk ($p=1 ~\mbox{or}~ 2$), when $H>1/2$. Asymptotic results are evaluated as both $T=T(N)$ and $N$ tend to infinity.
\end{abstract}
\textbf{Keyword:}
Random effects model; Fractional Brownian motion; Nonparametric estimation; Density estimator; Histogram estimator.



\section{Introduction}\label{Sec.1}
Long-memory processes or stochastic models having long-range dependence phenomena have been paid much attention in view of their applications in a variety of different scientific fields, including (but not limited to)  hydrology \cite{McLeod and hipel 1978}, biology \cite{Collins and De Luca 1994}, medicine \cite{Kuklinski et al. 1989}, economics \cite{Granger 1966} or traffic networks \cite{Willinger et al. 1995}. Perhaps the most popular approach for modeling long memory is the use of fractional Brownian motion (abbreviated as fBm) that has been verified as a good model to describe the long-memory property of some time process. As a consequence, in order to take into account long memory, it is natural to model the data exhibiting long-range dependence by  fBm  instead of the  Brownian motion, known by the independence property of its increments. fBm's have been introduced to the statistics community by Mandelbrot et $al$.\cite{Mandelbort et al. 1968}. A normalized fBm with the Hurst index $H\in (0,1)$ is a centered Gaussian process $\displaystyle \left( W^H_t,~t\geq 0 \right)$ having the covariance $$\mathbb{E}\left( W^H_s W^H_t\right)=\frac{1}{2}\left(t^{2H}+s^{2H}-\abs{t-s}^{2H} \right).$$

Statistical inference for stochastic differential  equations (hereafter SDEs)  driven by fBm  has progressed after the development of stochastic calculus with respect to the fBm. In modeling context, the problems of parameter inference are of particular importance, so the growing number of papers devoted to statistical methods for SDEs with fractional noise is not surprising. We mention  only a few of them; further references can be found in \cite{Mishura 2008, Prakasa 2010}. In \cite{Kleptsyna and Le Breton 2002}, the authors proposed and studied maximum likelihood estimators for fractional Ornstein–Uhlenbeck processes. Related results were obtained in \cite{Prakasa 2003}, where a more general model was considered. In \cite{Hu and Nualart 2010}, the authors proposed a least squares estimator for fractional Ornstein–Uhlenbeck processes and proved its asymptotic normality.\bigskip

\cite{Hu et al. 2011} and \cite{Tudor and Viens 2007} deal with the whole range of Hurst index $H\in (0,1)$, while other papers cited here investigate only the case $H >1/2$ (which corresponds to a long-memory process). Recall that in the case $H =1/2$, we have a classical diffusion, and there is vast literature devoted to it (see, e.g.\cite{Kutoyants 1984}, \cite[Vol II]{Liptser and Shiryaev 2001} and  \cite{Bishwal 2008}, for the review of the topic). \\

 In the context of stochastic differential equation models with random effects (hereafter SDEMRE),  which   are increasingly used in the biomedical field and have proved to be adequate tools for the study of repeated measurements collected on a series of subjects, parametric  inference has recently been investigated by many authors (see e.g.\cite{Ditlevsen and  De Gaetano 2005a, Delattre et al. 2012, Picchini et al. 2010, Picchini and Ditlevsen 2011, Nie and Yang 2005, Nie 2006, Nie 2007,  Antic et al. 2009}).  However, there is no reference at present related  to inference for SDEMREs driven by fBm. The main contribution of this paper is to provide a series of nonparametric estimators of the common density~$f$ of the random effects~$\phi_i$ on $\mathds{R}$ from the observations $\displaystyle X^i(t),~0\leq t\leq T,~i=1,\cdots,N$, which are either kernel estimators or histogram estimators.\\

 We focus on FSDEs
  of the form
  \begin{equation*} \displaystyle dX(t)=\left( a(X(t))+\phi b(t) \right) dt+\sigma(t) W^{H}(t), \end{equation*} where $\phi$ is a random variable with density $f$ belonging to a specified class of functions, and $W^H$ is a normalized fBm with Hurst index $H\in \left(1/2,1\right)$, which may not be known.
   We study the $L^p$-risk ($p=1$ or $2$) of the proposed estimators when $a(\cdot)$ is known or unkown. Asymptotic properties are evaluated as both $T$ and $N$ become large. To our knowledge, this problem has not been investigated in the context of FSDEs with random effects yet. \bigskip

This paper  is organized as follows. In Section \ref{Sec.2}, we introduce the model and some notation. Section 3 is devoted to our main results and is split into two subsections. In Subsection 3.1 we build ordinary kernel estimators and study their $L^2$-risk, while histogram estimators are given with their $L^1$-risk in Subsection~3.2. Section 4 is devoted to numerical simulations. In Section 5, we give concluding remarks. The appendix section provides auxiliary computations and facts which are used in the proof of the main results.\bigskip

 \section{Model and notation}\label{Sec.2}

 Let $\displaystyle\left(\Omega,\mathcal{F},(\mathcal{F}_t^i),\mathbb{P}\right)$ be a stochastic basis satisfying the usual conditions. The natural filtration of a stochastic process is understood as the $\mathbb{P}$-completion of the filtration generated by this process. Let $\displaystyle W^{H,i}=\left( W^{H,i}(t)~,~t\leq T \right)$, $i=1,\cdots,N$ be $N$ independent normalized fractional Brownian motions (fBm) with a common Hurst index $\displaystyle H\in (0,1)$. Let $\displaystyle \phi_1,\cdots,\phi_N$ be $N$ independent and identically distributed (i.i.d) $\mathds{R}$-valued random variables on the common probability space $\displaystyle\left(\Omega,\mathcal{F},\mathbb{P} \right)$ independent of  $\displaystyle \left( W^{H,1},\cdots,W^{H,N}\right)$. Consider $N$ subjects $\displaystyle \left(X^i(t),\mathcal{F}_t^i,t\leq T \right)$ with dynamics ruled by the following general linear stochastic differential equations:

  \begin{eqnarray}\label{Eq1} \displaystyle \nonumber dX^i(t) &=& \left(a(X^i(t))+\phi_i b(t) \right)dt+\sigma(t) dW^{H,i}(t),~t\leq T;\\
   \displaystyle   X^i(0) &=& x^i\in \mathds{R},~~i=1,\cdots,N,
  \end{eqnarray}where $b(\cdot)$ and $\sigma(\cdot)$ are known in their own spaces, but $a(\cdot)$ may be unknown. Let the random effects $\phi_i$ be $ \mathcal{F}_0^i$-measurable with common density $f$ belonging to a specified class of functions for each type of estimators. Sufficient conditions for the existence and uniqueness of solutions to (\ref{Eq1}) can be found in \cite[p. 197]{Mishura 2008} or \cite{Nualart and Rascanu 2002} and references therein.\\

Throughout this paper, we write $u \precsim v $ if an inequality holds up to a non-negative multiplicative constant and $u \varpropto v$ if $u$ equals $v$ up to a non-negative multiplicative constant. We denote by $o(\cdot)$ and $O(\cdot)$ the usual small-oh and big-oh under the probability $\mathbb{P}$, respectively. $\norm{\cdot}$ will denote the $L^2$-norm, unless we specify the norm as $\norm{\cdot}_p$, $p\geq 1$.  $\mathbb{E}$ and $\longrightarrow$ denote the expectation under the $\mathbb{P}$ and the simple convergence, respectively.

\section{Main results}\label{Sec.3}
\subsection{Ordinary kernel density estimators}\label{Subsec.1}
It is well known that standard kernel density estimators for the unknown density $f$ of $\phi_i$ are given by \begin{equation}\label{Eq2}
 \displaystyle \widehat{f}_h(x)=\frac{1}{Nh}\sum_{i=1}^{N}K\left( \frac{x-\phi_i}{h}\right),~~~h>0,
\end{equation}where $K$ is an integrable kernel that has to satisfy some regularity conditions on $f$. The random effects $\phi_i$ are not observed; it is natural to replace them by their estimators and prove the consistency of the proposed kernel estimators. We introduce some statistics which have a central role in the estimation procedure. For $i=1,\cdots,N$, we denote

 \begin{eqnarray*} \displaystyle U^{(1,i)}_t &=& \int_0^t \frac{b(s)}{\sigma^2(s)}dX^i(s),~~ U^{(2)}_t = \int_0^t \frac{b^2(s)}{\sigma^2(s)}ds,\\ \displaystyle  R^{(i)}_t  &=& \int_0^t \frac{a(X^i(s))b(s)}{\sigma^2(s)}ds~~\mbox{and}~~V^{(i)}_t = \int_0^t \frac{b(s)}{\sigma(s)}dW^{H,i}(s).
  \end{eqnarray*}
  We know that $V^{(i)}=\left( V^{(i)}_t,~t\geq 0\right)$, $i=1,\cdots,N$ are Wiener integrals with respect to fBm. A sufficient condition (see \cite{Prakasa 2010,Mishura 2008}) for the integrals $V^{(i)}$ to be well-defined is that $b(\cdot)/\sigma (\cdot)\in L^2(\mathds{R}_+)\cap L^1(\mathds{R}_+)$. The following assumptions are needed to estimate the random effects $\phi_i$: \begin{itemize} \item[\bf{A}$_1$ :] There exist $c_0,c_1 >0$ such that $$\displaystyle c_0^2\leq \frac{b^2(s)}{\sigma^2(s)}\leq
    c_1^2,~~ \mbox{for all}~ s\in\mathds{R}_+.$$
\item[\bf{A}$_2$ :] For $i=1,\cdots,N$, $\displaystyle
    M_i:=\mathbb{E}\left(\int_0^T\frac{a(X^i(s))}{\sigma^2(s)}ds\right)^2<\infty$.
\end{itemize}
 \begin{proposition}\label{PRO1}
  Let the assumptions \textbf{A}$_1$-\textbf{A}$_2$ be fulfilled. For $i=1,\cdots,N$ and $H>1/2$, we have
  $$\displaystyle \mathbb{E}\abs{\widehat{\phi}_{i,T}-\phi_i}^2\longrightarrow 0~\mbox{as}~T\rightarrow \infty,  ~\mbox{where}~ \displaystyle \widehat{\phi}_{i,T} :=U^{(1,i)}_T/U^{(2)}_T .$$
   \end{proposition}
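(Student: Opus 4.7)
The plan is to reduce the error $\widehat{\phi}_{i,T}-\phi_i$ to a simple ratio and then bound the numerator and denominator separately using the two assumptions.

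\textbf{Step 1 (Algebraic decomposition).} I would first substitute the dynamics of $X^i$ from \eqref{Eq1} into the stochastic integral $U^{(1,i)}_T$, splitting it as
\begin{equation*}
U^{(1,i)}_T = \int_0^T \frac{a(X^i(s))b(s)}{\sigma^2(s)}\,ds + \phi_i\int_0^T\frac{b^2(s)}{\sigma^2(s)}\,ds + \int_0^T\frac{b(s)}{\sigma(s)}\,dW^{H,i}(s) = R^{(i)}_T + \phi_i U^{(2)}_T + V^{(i)}_T.
\end{equation*}
Dividing by $U^{(2)}_T$ (which, under \textbf{A}$_1$, is strictly positive and deterministic since $b$ and $\sigma$ are deterministic), I obtain the fundamental identity
\begin{equation*}
\widehat{\phi}_{i,T}-\phi_i = \frac{R^{(i)}_T + V^{(i)}_T}{U^{(2)}_T}.
\end{equation*}
Applying $(x+y)^2\leq 2x^2+2y^2$ and pulling the deterministic denominator outside the expectation gives
\begin{equation*}
\mathbb{E}\abs{\widehat{\phi}_{i,T}-\phi_i}^2 \leq \frac{2\,\mathbb{E}(R^{(i)}_T)^2 + 2\,\mathbb{E}(V^{(i)}_T)^2}{(U^{(2)}_T)^2}.
\end{equation*}

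\textbf{Step 2 (Denominator and fractional noise).} Assumption \textbf{A}$_1$ immediately yields $U^{(2)}_T\geq c_0^2 T$, so $(U^{(2)}_T)^{-2}\leq c_0^{-4}T^{-2}$. For the Wiener--fBm integral $V^{(i)}_T$, I would use the classical isometry for $H>1/2$,
\begin{equation*}
\mathbb{E}(V^{(i)}_T)^2 = H(2H-1)\int_0^T\!\!\int_0^T\frac{b(s)b(u)}{\sigma(s)\sigma(u)}\abs{s-u}^{2H-2}\,ds\,du,
\end{equation*}
bound $\abs{b/\sigma}\leq c_1$ by \textbf{A}$_1$, and evaluate the remaining double integral as $T^{2H}/[H(2H-1)]$. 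This gives $\mathbb{E}(V^{(i)}_T)^2\precsim T^{2H}$, so its contribution to the ratio is $O(T^{2H-2})$ and vanishes because $H<1$.

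\textbf{Step 3 (Drift residual term).} For $R^{(i)}_T$ I would rewrite the integrand as $(a(X^i(s))/\sigma(s))\cdot(b(s)/\sigma(s))$ and apply Cauchy--Schwarz (alternatively, factor $\abs{b(s)}\leq c_1\sigma(s)$ from \textbf{A}$_1$ and then apply Cauchy--Schwarz in $L^2([0,T])$), producing
\begin{equation*}
(R^{(i)}_T)^2 \leq U^{(2)}_T\int_0^T\frac{a^2(X^i(s))}{\sigma^2(s)}\,ds \quad\text{or}\quad (R^{(i)}_T)^2 \precsim T\int_0^T\frac{a^2(X^i(s))}{\sigma^2(s)}\,ds.
\end{equation*}
Invoking assumption \textbf{A}$_2$ then gives $\mathbb{E}(R^{(i)}_T)^2 = O(T M_i)$, so dividing by $(U^{(2)}_T)^2 \geq c_0^4 T^2$ yields a term of order $M_i/T$, which tends to $0$.

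\textbf{Main obstacle.} The fBm variance computation and the $U^{(2)}_T$ lower bound are routine applications of \textbf{A}$_1$ and the stochastic calculus for $H>1/2$. The delicate point is the control of the drift residual $R^{(i)}_T$: one must carefully exploit both \textbf{A}$_1$ (to trade powers of $\sigma$ against powers of $b/\sigma$) and \textbf{A}$_2$ (whose precise form involves $\sigma^2$ in the denominator) so that the moment bound on $R^{(i)}_T$ grows strictly slower than $T^2$. Once this is in place, adding the two contributions gives $\mathbb{E}\abs{\widehat{\phi}_{i,T}-\phi_i}^2\precsim T^{-1}+T^{2H-2}\to 0$, which is the claim.
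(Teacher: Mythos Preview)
Your proof is correct and follows essentially the same route as the paper: identical decomposition $\widehat{\phi}_{i,T}-\phi_i=(R^{(i)}_T+V^{(i)}_T)/U^{(2)}_T$, the lower bound $U^{(2)}_T\geq c_0^2T$ from \textbf{A}$_1$, and separate control of the two numerators. The only cosmetic differences are that the paper bounds $\mathbb{E}(V^{(i)}_T)^2$ via the $L^{1/H}$ estimate of Mishura (Corollary~1.92) rather than the covariance double integral, and treats $R^{(i)}_T$ with Young's inequality $|uv|\leq\tfrac12(\varepsilon u^2+\varepsilon^{-1}v^2)$ (choosing $\varepsilon=\sqrt{T}$) instead of Cauchy--Schwarz; both pairs of choices yield the same orders $T^{2H-2}$ and $T^{-1}$.
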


\begin{proof}\label{proof.PRO1} Equation (\ref{Eq1}) yields \begin{equation*} \displaystyle U^{(1,i)}_t = R^{(i)}_t+\phi_i U^{(2)}_t+V^{(i)}_t,~t\leq T,~i=1,\cdots,N. \end{equation*}Thus \begin{equation}\label{Eq3} \displaystyle \frac{1}{2} \mathbb{E}\abs{\widehat{\phi}_{i,T}-\phi_i}^2 \leq \mathbb{E}\left(\frac{R^{(i)}_T}{U^{(2)}_T}\right)^2+\mathbb{E}\left(\frac{V^{(i)}_T}{U^{(2)}_T}\right)^2. \end{equation}We shall show that the expectations on the right hand side in (\ref{Eq3}) vanish as $T$ tends to infinity. Applying results in \cite[Corollary 1.92]{Mishura 2008} and the Jensen inequality, respectively, we obtain \begin{eqnarray*} \displaystyle \mathbb{E}\left(\frac{V^{(1)}_T}{U^{(2)}_T}\right)^2 &=&  \frac{1}{c_0^4T^2} \mathbb{E}\left( \int_0^T \frac{b(s)}{\sigma(s)} dW^{H,1}(s)\right)^2\\ \displaystyle &=& \frac{C_H^2c_1^2}{c_0^4T^2}\left( \int_0^T \abs{\frac{b(s)}{\sigma(s)}}^{1/H} ds\right)^{2H}\\ \displaystyle &=& \frac{C_H^2c_1^2}{c_0^4T^{2-H}}\longrightarrow 0~\mbox{as}~T\rightarrow \infty, \end{eqnarray*}where $C_H$ is a non-negative constant due to the Hardy-Littlewood theorem (see, \cite{Mishura 2008}). Using the fact that $\displaystyle \abs{uv}\leq \frac{1}{2}\left(\varepsilon u^2+\frac{v^2}{\varepsilon} \right)$ for all $u,v\in\mathds{R}$ and $\varepsilon>0$, we have \begin{eqnarray*} \displaystyle \mathbb{E}\left(\frac{R^{(1)}_T}{U^{(2)}_T}\right)^2 &=&  \frac{1}{4} \mathbb{E}\left\lbrace \frac{\varepsilon \int_0^T a^2(X^1(s))/\sigma^2(s) ds +\varepsilon^{-1}U^{(2)}_T}{U^{(2)}_T} \right\rbrace^2\\ \displaystyle &=& \frac{1}{2}\left\lbrace \frac{1}{\varepsilon^2}+\frac{\varepsilon^2}{c_0^4T^2}\mathbb{E}\left(\int_0^\infty a^2(X^1(s))/\sigma^2(s) ds \right)^2\right\rbrace. \end{eqnarray*}By choosing $\varepsilon=\sqrt{T}$, we get the desired result and the proof of Proposition \ref{PRO1} is complete. \end{proof} Now, substituting $\phi_i$ by its estimator $\widehat{\phi}_{i,T}$ in (\ref{Eq2}), we obtain the kernel estimators \begin{equation}\label{Eq4} \displaystyle \widehat{f}^{(1)}_h(x)=\frac{1}{Nh}\sum_{i=1}^{N}K\left( \frac{x-\widehat{\phi}_{i,T}}{h}\right). \end{equation} \begin{proposition}\label{PRO2} Consider Equation (\ref{Eq1}) where $a(\cdot)$ is unknown and consider the estimator $\widehat{f}^{(1)}_h$ given by (\ref{Eq4}). Assume that \textbf{A}$_1$ and \textbf{A}$_2$ are satisfied. If the kernel $K$ is differentiable with $\displaystyle \norm{K}^2+\norm{K'}^2<\infty$, then \begin{equation*} \displaystyle \mathbb{E}\norm{\widehat{f}^{(1)}_h-f}^2\leq 2\norm{f_h-f}^2+\frac{\norm{K}^2}{Nh}+\frac{\norm{K'}^2}{Th^3}\left(1+\frac{M_1}{c_0^4}+ \frac{2C_H^2c_1^2}{c_0^4T^{1-H}} \right), \end{equation*}where $\displaystyle f_h(x):=K_h*f(x)=\int_{\mathds{R}} K(x-u)f(u)du$. \end{proposition}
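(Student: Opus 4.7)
The plan is to compare $\widehat{f}^{(1)}_h$ with the \emph{oracle} kernel estimator $\widehat{f}_h$ in \eqref{Eq2} built from the unobserved $\phi_i$, for which classical kernel theory applies, and to control the plug-in error via the differentiability of $K$ together with the estimate already underlying Proposition~\ref{PRO1}. Concretely, I would write
\begin{equation*}
\widehat{f}^{(1)}_h - f \;=\; \bigl(\widehat{f}^{(1)}_h - \widehat{f}_h\bigr) + \bigl(\widehat{f}_h - f\bigr)
\end{equation*}
and apply $\norm{a+b}^2\le 2\norm{a}^2+2\norm{b}^2$, so that the task splits into bounding these two $L^2$-errors in expectation.

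For the oracle term I would use the standard bias/variance decomposition. Since $\mathbb{E}\widehat{f}_h(x) = (K_h*f)(x) = f_h(x)$ by the i.i.d.\ assumption on the $\phi_i$, the bias contribution integrated in $x$ is exactly $\norm{f_h-f}^2$; and from $\mathrm{Var}(\widehat{f}_h(x)) \le \frac{1}{Nh^2}\mathbb{E}K^2\!\bigl(\tfrac{x-\phi_1}{h}\bigr)$, the change of variable $u=(x-\phi_1)/h$ combined with Fubini gives $\int \mathrm{Var}(\widehat{f}_h(x))\,dx \le \norm{K}^2/(Nh)$. This yields $\mathbb{E}\norm{\widehat{f}_h - f}^2 \le \norm{f_h-f}^2 + \norm{K}^2/(Nh)$, accounting for the first two terms of the claim.

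The main work is the plug-in error $\mathbb{E}\norm{\widehat{f}^{(1)}_h - \widehat{f}_h}^2$, whose pivotal ingredient is the $L^2$ modulus-of-continuity estimate
\begin{equation*}
\int_{\mathbb{R}}\!\Bigl(K\!\bigl(\tfrac{x-a}{h}\bigr) - K\!\bigl(\tfrac{x-b}{h}\bigr)\Bigr)^{\!2}dx \;\le\; \frac{(a-b)^2}{h}\,\norm{K'}^2,
\end{equation*}
which I would prove by writing $K(u+\delta)-K(u)=\int_{0}^{\delta}K'(u+s)\,ds$, applying Cauchy--Schwarz in $s$, integrating in $u$ via Fubini, and then undoing the scaling $u=(x-b)/h$. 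Combined with Cauchy--Schwarz on the sum $\widehat{f}^{(1)}_h - \widehat{f}_h = \frac{1}{Nh}\sum_{i=1}^{N}\!\bigl[K(\tfrac{x-\widehat{\phi}_{i,T}}{h}) - K(\tfrac{x-\phi_i}{h})\bigr]$, this gives the pointwise-in-$\omega$ bound $\norm{\widehat{f}^{(1)}_h - \widehat{f}_h}^2 \le \frac{\norm{K'}^2}{Nh^3}\sum_{i=1}^{N}\abs{\widehat{\phi}_{i,T}-\phi_i}^2$, whose expectation reduces to $\frac{\norm{K'}^2}{h^3}\mathbb{E}\abs{\widehat{\phi}_{1,T}-\phi_1}^2$ by exchangeability of the pairs $(\phi_i,W^{H,i})$.

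Finally, I would revisit the proof of Proposition~\ref{PRO1}: with $\varepsilon=\sqrt{T}$ it in fact delivers the explicit quantitative bound
\begin{equation*}
\mathbb{E}\abs{\widehat{\phi}_{1,T}-\phi_1}^2 \;\le\; \frac{1}{T}\!\left(1 + \frac{M_1}{c_0^4} + \frac{2C_H^2 c_1^2}{c_0^4\, T^{1-H}}\right),
\end{equation*}
and substituting this into the previous display produces the third term of the claim. The main technical obstacle is the $L^2$-modulus inequality for $K$: it must be derived from the minimal hypotheses ``$K$ differentiable and $\norm{K'}<\infty$'' without assuming further smoothness on $K$, whereas the rest of the argument consists essentially in recombining constants that are already controlled in Proposition~\ref{PRO1}.
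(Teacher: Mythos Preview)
Your argument is correct and gives a bound of the same order, but it proceeds via a different decomposition than the paper's. The paper performs the bias--variance split directly on $\widehat{f}^{(1)}_h$,
\[
\mathbb{E}\norm{\widehat{f}^{(1)}_h-f}^2 \;=\; \norm{f-\mathbb{E}\widehat{f}^{(1)}_h}^2 + \mathbb{E}\norm{\widehat{f}^{(1)}_h-\mathbb{E}\widehat{f}^{(1)}_h}^2,
\]
and then inserts $f_h$ only into the \emph{deterministic} bias term, bounding $\norm{f_h-\mathbb{E}\widehat{f}^{(1)}_h}^2$ via Jensen's inequality followed by Taylor's theorem with integral remainder and the generalized Minkowski inequality. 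You instead insert the oracle estimator $\widehat{f}_h$ at the level of the random variables and control the \emph{random} plug-in error $\norm{\widehat{f}^{(1)}_h-\widehat{f}_h}^2$ pointwise in $\omega$. Both routes ultimately rest on the same $L^2$ modulus-of-continuity estimate for $K_h$ and on the quantitative bound for $\mathbb{E}\abs{\widehat{\phi}_{1,T}-\phi_1}^2$ coming from Proposition~\ref{PRO1}, so they are close cousins. The paper's decomposition buys a slightly sharper constant: because the variance term is computed for $\widehat{f}^{(1)}_h$ itself rather than after an $(a+b)^2\le 2a^2+2b^2$ split, it produces $\norm{K}^2/(Nh)$ without the extra factor of~$2$ that your splitting necessarily incurs on that term. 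Your route, on the other hand, is more transparent in that it isolates the entire effect of the plug-in $\phi_i\mapsto\widehat{\phi}_{i,T}$ in a single random quantity, and it avoids the appeal to the generalized Minkowski inequality.
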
 \begin{proof}\label{proof.PRO2} Simple computations show that \begin{eqnarray}\label{Eq5} \nonumber \displaystyle \mathbb{E}\norm{\widehat{f}^{(1)}_h-f}^2 &=& \norm{f-\mathbb{E}(\widehat{f}^{(1)}_h)}^2+\mathbb{E}\left(\norm{\widehat{f}^{(1)}_h-\mathbb{E}(\widehat{f}^{(1)}_h)}^2\right) \\ \displaystyle &\leq &   2\norm{f-f_h}^2+2\norm{f_h-\mathbb{E}(\widehat{f}^{(1)}_h)}^2+\mathbb{E}\left(\norm{\widehat{f}^{(1)}_h-\mathbb{E}(\widehat{f}^{(1)}_h)}^2\right). \end{eqnarray}To complete the proof, we evaluate the last two terms in (\ref{Eq5}). Set $\displaystyle\eta_{i,T}(x)=K_h(x-\widehat{\phi}_{i,T})-\mathbb{E}\left( K_h(x-\widehat{\phi}_{i,T})\right)$, where $\displaystyle K_h(u)=\frac{1}{h}K\left( \frac{u}{h}\right)$. $\eta_{i,T}(x)$, $i=1,\cdots,N$ are i.i.d random variables with $\mathbb{E}\left[\eta_{1,T}(x)\right]=0$, and with a change of variables $\displaystyle \frac{x-\widehat{\phi}_{1,T}}{h}=y $ in the second inequality below, we get

 \begin{eqnarray*} \displaystyle \int_{\mathds{R}} \mathbb{E}\left(\eta_{1,T}(x)\right)^2 dx &=&\int_{\mathds{R}} \mbox{Var}\left( K_h(x-\widehat{\phi}_{1,T})\right)dx \\ \displaystyle &\leq  &\int_{\mathds{R}} \mathbb{E}\left( K_h(x-\widehat{\phi}_{1,T})\right)^2dx \\ \displaystyle &\leq  &\frac{1}{h^2}\mathbb{E}\int_{\mathds{R}} \left( K\left(\frac{x-\widehat{\phi}_{1,T}}{h}\right)\right)^2dx \\
 \displaystyle &\leq  &\frac{1}{h}\int_{\mathds{R}} K^2(y)dy.
  \end{eqnarray*} Thus
\begin{eqnarray*} \displaystyle \mathbb{E}\left(\norm{\widehat{f}^{(1)}_h-\mathbb{E}(\widehat{f}^{(1)}_h)}^2\right) &=& \mathbb{E}\int_{\mathds{R}} \left(\widehat{f}^{(1)}_h(x)-\mathbb{E}\widehat{f}^{(1)}_h(x) \right)^2 dx\\ \displaystyle &=& \frac{1}{N^2}\mathbb{E}\int_{\mathds{R}} \left(\sum_{i=1}^{N}\eta_{i,T}(x) \right)^2 dx\\ \displaystyle &=& \frac{1}{N}\int_{\mathds{R}} \mathbb{E}\left(\eta_{1,T}(x)\right)^2 dx \leq  \frac{\norm{K}^2}{Nh}. \end{eqnarray*}
There remains to find an upper bound of the middle term in (\ref{Eq5}). First, note that $\displaystyle f_h(x)=\int_{\mathds{R}} f(y)K_h(x-y)dy=\mathbb{E}\left( K_h(x-\phi_1)\right)$. Taylor's theorem with integral remainder yields \begin{equation*} \displaystyle K_h(x-\widehat{\phi}_{1,T})-K_h(x-\phi_1) =  \frac{(\phi_1-\widehat{\phi}_{1,T})}{h^2}\int_0^1 K'\left(\frac{1}{h}(x-\phi_1+u(\phi_1-\widehat{\phi}_{1,T})) \right) du. \end{equation*}
 Now, set $\displaystyle g(x,u)=K'\left(\frac{1}{h}(x-\phi_1+u(\phi_1-\widehat{\phi}_{1,T})) \right)$, then

\begin{eqnarray*}
\displaystyle \nonumber \norm{f_h-\mathbb{E}(\widehat{f}^{(1)}_h)}^2 &=& \int_{\mathds{R}} \left[ \mathbb{E}\left( K_h(x-\widehat{\phi}_{1,T})-K_h(x-\phi_1)\right)\right]^2dx \\
 \displaystyle &\leq &\int_{\mathds{R}} \mathbb{E}\left( K_h(x-\widehat{\phi}_{1,T})-K_h(x-\phi_1)\right)^2dx \\
  \displaystyle &\leq & \mathbb{E}\left[ \frac{(\phi_1-\widehat{\phi}_{1,T})^2}{h^4}\int_{\mathds{R}}\left[\int_0^1 g(x,u) du\right]^2dx\right] \\
  \displaystyle &\leq &\mathbb{E}\left[ \frac{(\phi_1-\widehat{\phi}_{1,T})^2}{h^4}\left[\int_0^1\left(\int_{\mathds{R}} g^2(x,u) dx\right)^{1/2}du\right]^2\right].
 \end{eqnarray*}
 The last inequality given above is justified by the generalized Minkowski inequality (see \cite[Lemma A.1]{Tsybakov 2009}). By change of variables $\displaystyle y=\frac{1}{h}\left(x-\phi_1+u(\phi_1-\widehat{\phi}_{1,T}) \right)$, we get $\displaystyle \int_{\mathds{R}} g^2(x,u) dx = \norm{K'}^2 h$. Thus $\displaystyle \norm{f_h-\mathbb{E}(\widehat{f}^{(1)}_h)}^2 \leq \frac{\norm{K'}^2 }{h^3}\mathbb{E}(\phi_1-\widehat{\phi}_{1,T})^2$, which completes the proof (see the proof of Proposition \ref{PRO1}).
  \end{proof}
   We recall that a kernel of order $l\geq 1$ (for the construction of such a kernel we refer to \cite[p.10]{Tsybakov 2009}) satisfies $\displaystyle \int_{\mathds{R}} K(u)du=1$ and $\displaystyle \int_{\mathds{R}} u^jK(u)du=0$, for $j=1,\cdots,l$. For constants $\beta>0$ and $L>0$, we define the Nikol'ski class $\mathcal{N}(\beta,L)$ as the set of functions $\displaystyle f : \mathds{R}\longrightarrow\mathds{R}$, whose derivatives $f^{(l)}$ of order $l=\lfloor \beta\rfloor$ exist and satisfy $$\displaystyle \left[\int_{\mathds{R}} \left( f^{(l)}(x+t)-f^{(l)}(x)\right)^2dx\right]^{1/2} \leq L\abs{t}^{\beta-l},~\forall t\in\mathds{R} ,$$
   where $\lfloor \beta\rfloor$ denotes the greatest integer strictly
   less than the real number $\beta$.
\begin{corollary}\label{COR1} Assume that $f\in\mathcal{N}(\beta,L)$ and that the kernel $K$ has order $l=\lfloor\beta\rfloor$ with $\displaystyle \int_{\mathds{R}} \abs{u}^\beta \abs{K(u)} du<\infty$. Fix $\alpha>0$ and take $h=\alpha N^{-1/(2\beta+1)}$ and $T\geq N^{(2\beta+3)/(2\beta+1)}$. Then for any $N\geq 1$, the kernel estimator $\widehat{f}^{(1)}_h$ satisfies $\displaystyle \mathbb{E}\norm{\widehat{f}^{(1)}_h-f}^2 \lesssim N^{-2\beta/(2\beta+1)}$. \end{corollary}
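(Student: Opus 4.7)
The plan is to start from the oracle-type inequality in Proposition \ref{PRO2} and balance its three terms by the stated choices of $h$ and $T$. Concretely, I would bound each of the three contributions
\[
2\norm{f_h-f}^2,\qquad \frac{\norm{K}^2}{Nh},\qquad \frac{\norm{K'}^2}{Th^3}\Bigl(1+\frac{M_1}{c_0^4}+\frac{2C_H^2c_1^2}{c_0^4 T^{1-H}}\Bigr),
\]
separately by a constant multiple of $N^{-2\beta/(2\beta+1)}$, and then sum.

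For the bias term $\norm{f_h-f}^2$, I would use the standard Nikol'ski-class computation (see e.g.\ \cite{Tsybakov 2009}). Writing $f_h(x)-f(x)=\int K(u)\bigl[f(x-hu)-f(x)\bigr]du$ and expanding $f(x-hu)$ to order $l=\lfloor\beta\rfloor$ with integral remainder, the cancellation of moments of $K$ up to order $l$ reduces the integrand to $\frac{(-hu)^l}{(l-1)!}\int_0^1(1-\tau)^{l-1}\bigl[f^{(l)}(x-\tau hu)-f^{(l)}(x)\bigr]d\tau$. Taking the $L^2$-norm in $x$, applying the generalized Minkowski inequality, and using the Nikol'ski bound $\norm{f^{(l)}(\cdot-\tau hu)-f^{(l)}}\leq L|\tau hu|^{\beta-l}$ together with the assumption $\int|u|^\beta|K(u)|du<\infty$, I would obtain $\norm{f_h-f}^2 \lesssim h^{2\beta}$. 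With $h=\alpha N^{-1/(2\beta+1)}$ this is of the desired order $N^{-2\beta/(2\beta+1)}$.

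For the variance-type term, the choice $h=\alpha N^{-1/(2\beta+1)}$ immediately gives $\frac{1}{Nh}\varpropto N^{-2\beta/(2\beta+1)}$. For the estimation-error term, substituting $h^3=\alpha^3 N^{-3/(2\beta+1)}$ yields
\[
\frac{1}{Th^3}=\frac{N^{3/(2\beta+1)}}{\alpha^3 T},
\]
so the hypothesis $T\geq N^{(2\beta+3)/(2\beta+1)}$ gives $\frac{1}{Th^3}\leq \alpha^{-3}N^{-2\beta/(2\beta+1)}$. Finally, since $H<1$, the factor $\bigl(1+M_1/c_0^4+2C_H^2c_1^2/(c_0^4 T^{1-H})\bigr)$ is uniformly bounded in $T\geq 1$ by some constant depending only on $H$, $c_0$, $c_1$ and $M_1$. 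Collecting the three bounds concludes the proof.

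I do not expect a serious obstacle: the argument is the usual bias/variance trade-off, with the extra complication only that one must also control the plug-in error $\norm{f_h-\mathbb{E}\widehat{f}^{(1)}_h}^2$, which is exactly where the lower bound on $T$ enters. The only step requiring some care is the Nikol'ski bias bound, because the order $l=\lfloor\beta\rfloor$ is defined here as the greatest integer strictly less than $\beta$, so one must make sure the Taylor expansion is pushed to the right order and that $\int|u|^\beta|K(u)|du<\infty$ is used (and not $|u|^l$) to absorb the remainder.
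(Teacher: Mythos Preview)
Your proposal is correct and is exactly the intended argument: the paper gives no separate proof of this corollary, treating it as an immediate consequence of Proposition~\ref{PRO2} together with the standard Nikol'ski bias bound $\norm{f_h-f}^2\lesssim h^{2\beta}$ from \cite{Tsybakov 2009}, followed by the very balancing of $h$ and $T$ that you carry out. Your verification of the three terms and of the uniform boundedness of the $T^{-(1-H)}$ factor matches what the paper tacitly assumes.
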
 \begin{corollary}\label{COR2} Consider Equation (\ref{Eq1}) where $a(\cdot)$ is known. We introduce the estimators \begin{equation*}\label{Eq6} \displaystyle \widehat{f}^{(2)}_h(x)=\frac{1}{Nh}\sum_{i=1}^{N}K\left( \frac{x-\widetilde{\phi}_{i,T}}{h}\right), \end{equation*}where $\displaystyle \widetilde{\phi}_{i,T}:=\widehat{\phi}_{i,T}-R^{(i)}_T/U^{(2)}_T$. Under the assumption \textbf{A}$_1$, the estimators $\widehat{f}^{(2)}_h$ are consistent with the same optimal rate as for $\widehat{f}^{(1)}_h$. \end{corollary}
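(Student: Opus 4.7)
The plan is to reuse, essentially verbatim, the arguments of Proposition~\ref{PRO1} and Proposition~\ref{PRO2}, exploiting the fact that when $a(\cdot)$ is known the correction term $R^{(i)}_T/U^{(2)}_T$ is computable and can be subtracted from $\widehat{\phi}_{i,T}$; this removes precisely the piece of the error that required assumption \textbf{A}$_2$.

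First, starting from the identity $U^{(1,i)}_T = R^{(i)}_T+\phi_i U^{(2)}_T+V^{(i)}_T$ already derived in the proof of Proposition~\ref{PRO1}, dividing by $U^{(2)}_T$ and using the definition of $\widetilde{\phi}_{i,T}$ yields the clean equality
$$\widetilde{\phi}_{i,T}-\phi_i = \frac{V^{(i)}_T}{U^{(2)}_T}.$$
The $L^2$-moment of this ratio has already been controlled in the proof of Proposition~\ref{PRO1} using only \textbf{A}$_1$, giving
$$\mathbb{E}\abs{\widetilde{\phi}_{i,T}-\phi_i}^2 \;\leq\; \frac{C_H^2 c_1^2}{c_0^4\,T^{2-H}}.$$

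Next, I would replay the $L^2$-risk decomposition of Proposition~\ref{PRO2} with $\widetilde{\phi}_{i,T}$ in place of $\widehat{\phi}_{i,T}$: the bias-variance split, the i.i.d.\ centering that produces the variance term $\norm{K}^2/(Nh)$, and the Taylor-with-integral-remainder plus generalized Minkowski step producing the factor $\norm{K'}^2/h^3$ times $\mathbb{E}(\widetilde{\phi}_{1,T}-\phi_1)^2$, all carry over without any modification. Plugging in the sharper moment bound above gives
$$\mathbb{E}\norm{\widehat{f}^{(2)}_h-f}^2 \;\leq\; 2\norm{f_h-f}^2+\frac{\norm{K}^2}{Nh}+\frac{\norm{K'}^2}{h^3}\cdot\frac{C_H^2 c_1^2}{c_0^4\,T^{2-H}},$$
an estimate that depends only on \textbf{A}$_1$.

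Finally, as in Corollary~\ref{COR1}, for $f\in\mathcal{N}(\beta,L)$ with a kernel of order $\lfloor \beta\rfloor$ and finite $\beta$-th absolute moment, the bias $\norm{f_h-f}^2$ is of order $h^{2\beta}$; the choice $h=\alpha N^{-1/(2\beta+1)}$ balances this against $\norm{K}^2/(Nh)$ at rate $N^{-2\beta/(2\beta+1)}$, and the last term is absorbed into the same rate as soon as $T^{2-H}\gtrsim N^{(2\beta+3)/(2\beta+1)}$. Since $2-H>1$, this condition on $T$ is strictly milder than the one of Corollary~\ref{COR1}, so the same optimal rate is indeed recovered. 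The only substantive thing to verify is that assumption \textbf{A}$_2$ is genuinely nowhere invoked once $\widehat{\phi}_{i,T}$ is replaced by $\widetilde{\phi}_{i,T}$; this is the ``main obstacle'', though a very mild one, and amounts to inspecting the chain of inequalities in Propositions~\ref{PRO1}--\ref{PRO2} to confirm that the $R^{(i)}_T$ contribution has been cleanly removed.
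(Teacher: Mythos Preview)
Your proposal is correct and mirrors exactly the approach the paper intends: the paper states Corollary~\ref{COR2} without proof, treating it as an immediate consequence of the arguments in Propositions~\ref{PRO1} and~\ref{PRO2}, and your write-up spells out precisely those details. The key observation---that subtracting the now-computable $R^{(i)}_T/U^{(2)}_T$ leaves $\widetilde{\phi}_{i,T}-\phi_i=V^{(i)}_T/U^{(2)}_T$, whose second moment is controlled by \textbf{A}$_1$ alone---is exactly what the authors have in mind, and the rest of your argument (reusing the bias--variance decomposition and the Taylor/Minkowski step) is the natural continuation.
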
 \begin{remark} The assumption \textbf{A}$_2$ can be weakened as follows \begin{itemize} \item[\bf{A'}$_2$ :] For each $i$, there exists $\delta>0$ such that
    $$\limsup_{t\rightarrow\infty}\frac{1}{t^{2-\delta}\log(t)}\mathbb{E}\left(\int_0^t\frac{a(X^i(s))}{\sigma^2(s)}ds\right)^2<\infty.$$
\end{itemize} \end{remark}

\subsection{Histogram estimators}\label{Subsec.2} Consider a sequence of partitions of $\mathds{R}$ of the form $\displaystyle \mathcal{P}_N=\left\lbrace A_{Nj},~j=1,2,\cdots \right\rbrace$, $N\geq 1$, where all $A_{Nj}$'s are Borel sets with finite nonzero Lebesgue measure. We assume that the sequence of partitions is rich enough such that the class of Borel sets $\mathcal{B}$ is equal to
$$ \displaystyle\bigcap_{N=1}^{\infty}\bm{\sigma}\left( \displaystyle\bigcup_{m=N}^{\infty}\mathcal{P}_m\right),$$
where we use the symbol $\bm{\sigma}$ here for the $\sigma$-algebra generated by a class of sets.\bigskip

Given a sequence of i.i.d random variables $X_1,\cdots,X_N$, with common density $f$, the histogram estimate is (as in \cite{Devroye and Gyorfi 1985}) defined by
\begin{equation*} \displaystyle T(X_\cdot)(x)=\frac{1}{N}\sum_{i=1}^{N}\frac{\chi_{(X_i\in A_{Nj})}}{\lambda(A_{Nj})},~~ x\in A_{Nj},
\end{equation*}where $\lambda$ denotes the Lebesgue measure. For our case, we will consider the following histogram estimators $\displaystyle  \widehat{f}_h^{(3)}(x)= T(\widehat{\phi}_\cdot)(x)$; $\displaystyle  \widehat{f}_h^{(4)}(x)= T(\widetilde{\phi}_\cdot)(x)$. If the density $f$ of the random effects $\phi_i$ has compact support, then a good estimator should have
compact support as well. To guarantee such property we trim the proposed estimators by $\displaystyle \chi_{\mbox{supp}f}$. \bigskip

Let $\mathcal{F'}_b$ denote the class of functions satisfying \begin{itemize} \item[(i)] $f$ is absolutely continuous with derivative $f'$ (almost everywhere); \item[(ii)] $f'$ is bounded and continuous ( $\displaystyle\int_{\mathds{R}} \abs{f'}<\infty$).
 \end{itemize} We consider the partitions $A_{Nj}=\left[hj,h(j+1)\right)$, $j\in\mathbb{Z}$. The following special functions will be used later: $\displaystyle r_N(x)=\frac{x}{h}-j$, $\displaystyle z_N(x)=\left(1-2 r_N(x) \right)f'(x)$ and $$\displaystyle \Psi(u)=\sqrt{\frac{2}{\pi}}\left( u\int_0^u e^{-x^2/2}dx+e^{-u^2/2}\right),~~u\geq 0.$$
\begin{proposition}\label{PRO4} Let $f\in\mathcal{F'}_b$ have compact support $A$ and assume that $1,\cdots,J $ are nonzero indices for which $\displaystyle \lambda\left( A_{Nj}\cap A \right) \neq 0$ and $T=T(N)$, where $\lambda$ is the Lebesgue measure. Then, the following statements hold true:
 \begin{itemize}
\item[\rm{(i)}] When $a(\cdot)$ is unknown, under the assumptions \textbf{A}$_1$ and \textbf{A}$_2$, we have \begin{equation*} \displaystyle \mathbb{E}\norm{\widehat{f}_h^{(3)}-f}_1\leq \psi_1(N,h)+\psi_2(h)+\frac{dJ}{h^2\sqrt{T}}+o\left(h+\frac{1}{\sqrt{Nh}} \right),
\end{equation*} where $d$ is some non-negative constant and
 \begin{eqnarray*} \displaystyle  \psi_1(N,h) \hspace{-0.3cm}&=&\hspace{-0.3cm} \int_{\mathds{R}} \sqrt{\frac{f}{Nh}}\Psi\left( \frac{h}{2}\abs{z_N}\sqrt{\frac{Nh}{f}}\right)\rightarrow 0\mbox{ as}~h\rightarrow 0,~Nh\rightarrow \infty,\\ \displaystyle \psi_2(h) \hspace{-0.3cm}&=& \hspace{-0.3cm}\frac{2}{N}\sum_{i=1}^{N}\sum_{j=1}^{J}\mathbb{P}\left( \phi_i\in A_{Nj} \right)^{1/2}\rightarrow 0~\mbox{as}~h\rightarrow 0~\mbox{(see Lemma \ref{LEM3}).}
\end{eqnarray*}
\item[\rm{(ii)}] When $a(\cdot)$ is known, we may relax the assumption \textbf{A}$_2$, and the same result holds for
    $\widehat{f}_h^{(4)}$.
\end{itemize}
\end{proposition}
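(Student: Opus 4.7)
I would split the risk against the idealized histogram built from the true (unobservable) $\phi_i$:
\[
\mathbb{E}\norm{\widehat{f}_h^{(3)}-f}_1 \leq \mathbb{E}\norm{\widehat{f}_h^{(3)}-T(\phi_\cdot)}_1 + \mathbb{E}\norm{T(\phi_\cdot)-f}_1.
\]
The first summand is the ``plug-in'' error from replacing $\phi_i$ by $\widehat{\phi}_{i,T}$; the second is the classical $L^1$-risk of a histogram built from $N$ i.i.d.\ samples of $f$ and is independent of the fractional dynamics.

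For the ideal term I would apply a Devroye--Gy\"orfi / Abou-Jaoude type expansion of the $L^1$-risk on the class $\mathcal{F}'_b$: the centred stochastic fluctuation yields $\psi_1(N,h)$ via the truncated-normal mean absolute deviation $\Psi$; the cells meeting $\partial\,\mathrm{supp}\,f$ produce the boundary-bias term $\psi_2(h)$ (the content of Lemma~\ref{LEM3}); and higher-order corrections are absorbed into $o(h+1/\sqrt{Nh})$ using that $f'$ is bounded, continuous and integrable. This part of the argument is orthogonal to the fractional noise and depends only on the regularity of $f$.

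For the plug-in term, both estimators are piecewise constant on the same partition, so on each cell
\[
\int_{A_{Nj}}\abs{\widehat{f}_h^{(3)}(x)-T(\phi_\cdot)(x)}\,dx = \frac{1}{N}\Bigl|\sum_{i=1}^N\bigl(\chi_{\widehat{\phi}_{i,T}\in A_{Nj}}-\chi_{\phi_i\in A_{Nj}}\bigr)\Bigr|.
\]
Summing over the $J$ cells that intersect $\mathrm{supp}\,f$, taking expectations and invoking the i.i.d.\ structure of $(\phi_i,\widehat{\phi}_{i,T})_i$, each summand reduces to $\mathbb{P}(\phi_1,\widehat{\phi}_{1,T}\text{ lie in different cells})$. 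I would control this boundary-crossing probability by splitting at a threshold $\eta$: on $\{\abs{\widehat{\phi}_{1,T}-\phi_1}\geq \eta\}$, apply Markov together with $\mathbb{E}\abs{\widehat{\phi}_{1,T}-\phi_1}^2 \precsim 1/T$ as extracted from the proof of Proposition~\ref{PRO1} under \textbf{A}$_1$--\textbf{A}$_2$; on the complementary event, bound ``$\phi_1$ within $\eta$ of a partition boundary'' by $2\eta J \norm{f}_\infty$ using $f\in\mathcal{F}'_b$. Optimising $\eta$ and carrying through the histogram height normalisation $1/h$ produces the announced term of order $dJ/(h^2\sqrt{T})$. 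Part~(ii) follows verbatim once one observes that subtracting $R^{(i)}_T/U^{(2)}_T$ removes, in Proposition~\ref{PRO1}, precisely the summand whose control required \textbf{A}$_2$, so the same bound holds for $\widehat{f}_h^{(4)}$ under \textbf{A}$_1$ alone.

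The main obstacle is the plug-in step: because $\chi_{A_{Nj}}$ is discontinuous, the smooth Taylor-remainder device used in Proposition~\ref{PRO2} for the kernel estimator is unavailable, and one cannot simply transport the $L^2$-error of $\widehat{\phi}_{i,T}-\phi_i$ through a derivative of the weight. The delicate point is to translate this $L^2$-control into a boundary-crossing probability with the correct blow-up in $h$ (the $1/h^2$ factor), sharp enough that, for the growth regime $T=T(N)\to\infty$, the extra $dJ/(h^2\sqrt{T})$ stays dominated by the classical histogram contributions $\psi_1$ and $\psi_2$; any looser calibration would spoil the rate.
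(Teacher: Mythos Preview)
Your overall decomposition into ``ideal histogram'' plus ``plug-in error'' is exactly what the paper does, and invoking Devroye--Gy\"orfi for the ideal term is correct. But you have misidentified where $\psi_2(h)$ comes from, and this throws off the whole accounting. In the paper, the Devroye--Gy\"orfi expansion of $\mathbb{E}\norm{T(\phi_\cdot)-f}_1$ contributes \emph{only} $\psi_1(N,h)+o(h+1/\sqrt{Nh})$; there is no separate boundary-bias term, and $\psi_2(h)=2\sum_{j\leq J}\mathbb{P}(\phi_1\in A_{Nj})^{1/2}$ is a sum over \emph{all} $J$ cells meeting the support, not just those touching $\partial A$. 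Lemma~\ref{LEM3} is not a boundary lemma; it simply shows that this root-cell-mass sum vanishes as $h\to0$.

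Both $\psi_2(h)$ and $dJ/(h^2\sqrt{T})$ arise from the plug-in term, and the mechanism is not your threshold-$\eta$ argument. The paper bounds $\mathbb{E}\abs{\chi_{(\widehat{\phi}_{i,T}\in A_{Nj})}-\chi_{(\phi_i\in A_{Nj})}}$ via Lemma~\ref{LEM2} (Cauchy--Schwarz plus Portmanteau), which produces the factor $\mathbb{P}(\phi_i\in A_{Nj})^{1/2}$ multiplied by $\mathbb{P}(\widehat{\phi}_{i,T}\notin A_{Nj})^{1/2}+\mathbb{P}(\phi_i\notin A_{Nj})^{1/2}$. The latter square roots are then bounded, via Lemma~\ref{LEM1} with the splitting parameter $\alpha=1-h$ and Chebyshev on $\abs{\widehat{\phi}_{i,T}-\phi_i}$, by $\sqrt{d_1}/(h^2\sqrt{T})+1$; the ``$+1$'' part, summed against $\mathbb{P}(\phi_i\in A_{Nj})^{1/2}$, is exactly $\psi_2(h)$, while the Chebyshev part gives $dJ/(h^2\sqrt{T})$ after bounding $\mathbb{P}(\phi_i\in A_{Nj})^{1/2}\leq 1$. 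Your threshold device is a legitimate alternative for the plug-in piece, but it does not yield these two specific terms: once you integrate over $A_{Nj}$ the factor $1/h$ cancels against $\lambda(A_{Nj})=h$, so there is no ``height normalisation $1/h$'' left to carry, and optimising your $\eta$ gives a bound of a different order (roughly $J\,T^{-1/3}$), not $dJ/(h^2\sqrt{T})$. So as written your argument does not reproduce the inequality in the statement.
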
 \begin{proof}By virtue of \cite[Theorem 6]{Devroye and Gyorfi 1985}, and for sufficiently small $h$ such that $Nh\rightarrow \infty$, we have
 \begin{eqnarray*}
 \displaystyle \mathbb{E}\norm{\widehat{f}_h^{(3)}-f}_1 &\leq  & \sum_{j} \mathbb{E}\int_{A_{Nj}\cap A} \abs{\widehat{f}_h^{(3)}(x)-f(x)} dx\\
  \displaystyle &\leq  &\sum_{j} \mathbb{E}\int_{A_{Nj}} \abs{T(\phi_\cdot)(x)-f(x)} dx \displaystyle  + \sum_{j\leq J} \mathbb{E}\int_{A_{Nj}} \abs{T(\widehat{\phi}_\cdot)(x)-T(\phi_\cdot)(x)} dx \\
   \displaystyle &\leq & \mathbb{E}\norm{T(\widehat{\phi}_\cdot)-f}_1 \displaystyle  + \frac{1}{Nh}\sum_{j=1}^{ J}\sum_{i=1}^{ N}  \int_{hj}^{h(j+1)}  \mathbb{E}\abs{\chi_{(\widehat{\phi}_{i,T}\in A_{Nj})}-\chi_{(\phi_i\in A_{Nj})} } dx  \\
   \displaystyle &\leq &  \psi_1(N,h)+o\left(h+\frac{1}{\sqrt{Nh}} \right)\\
   \displaystyle & & ~~~~+\frac{1}{Nh}\sum_{j=1}^{ J}\sum_{i=1}^{ N}  \int_{hj}^{h(j+1)}  \mathbb{E}\abs{\chi_{(\widehat{\phi}_{i,T}\in A_{Nj})}-\chi_{(\phi_i\in A_{Nj})} } dx.
    \end{eqnarray*}
    Let $\nu(N,J,h)$ denote the last term in the last inequality above. The sequence $\widehat{\phi}_{i,T(N)}$ converges weakly to $\phi_i$, since it converges in $L^2$-sense  as $N$ tends to infinity (say $T(N)\rightarrow \infty$).
     Thus, by using Lemma \ref{LEM2}, we obtain
     \begin{equation*}
      \displaystyle \nu(N,J,h) \leq \frac{\sqrt{2}}{N}\sum_{j=1}^{ J}\sum_{i=1}^{ N}  \mathbb{P}(\phi_i\in A_{Nj})^{1/2}\displaystyle \left[\mathbb{P}(\widehat{\phi}_{i,T}\notin A_{Nj})^{1/2}+\mathbb{P}(\phi_i\notin A_{Nj})^{1/2}\right].
      \end{equation*}Let $\alpha\in (0,1)$ to be specified later. We apply Lemma \ref{LEM1} to get
     \begin{eqnarray*} \displaystyle \mathbb{P}\left(\widehat{\phi}_{i,T}\notin A_{Nj}\right) &\leq & \mathbb{P}\left( \abs{\widehat{\phi}_{i,T}-h(j+1/2)}\geq h/2\right)\\ \displaystyle &\leq &\mathbb{P}\left( \abs{\widehat{\phi}_{i,T}-\phi_i}\geq (1-\alpha)h/2\right)+\mathbb{P}\left( \abs{\phi_i-h(j+1/2)}\geq \alpha h/2\right)\\ \displaystyle &\leq &\frac{4\mathbb{E}\left( \widehat{\phi}_{i,T}-\phi_i\right)^2}{(1-\alpha)^2h^2}+\mathbb{P}\left( \phi_i\notin A^{(\alpha)}_{Nj}\right)\leq \frac{d_1}{(1-\alpha)^2h^2T}+1,
     \end{eqnarray*}
     where $d_1$ is some non-negative constant (see the proof of Proposition \ref{PRO1}) and $\displaystyle A^{(\alpha)}_{Nj}=\left( h(j+\frac{1-\alpha}{2}),h(j+\frac{1+\alpha}{2})\right)$. Similarly, one can prove that $\displaystyle \mathbb{P}\left(\phi_i\notin A_{Nj}\right) \leq  \frac{d_1}{(1-\alpha)^2h^2T}+1$. Thus \begin{eqnarray*} \displaystyle \nu(N,J,h) &\leq & \frac{2}{N}\sum_{j=1}^{ J}\sum_{i=1}^{ N} \left[ \mathbb{P}\left(\phi_i\in A_{Nj}\right)\left(\frac{d_1}{(1-\alpha)^2h^2T}+1\right)\right]^{1/2}\\ \displaystyle &\leq &\frac{2 \sqrt{ d_1} J}{(1-\alpha)h\sqrt{T}}+\frac{2}{N}\sum_{j=1}^{ J}\sum_{i=1}^{ N} \left[\mathbb{P}\left(\phi_i\in A_{Nj}\right)\right]^{1/2}, \end{eqnarray*}where we used the fact that $\displaystyle \sqrt{u+v}\leq\sqrt{u}+\sqrt{v}$, for all $u,v\in \mathds{R}_{+}$. Set $d=2\sqrt{d_1}$ and $\alpha=1-h$ to complete the proof.
 \end{proof}

\begin{proposition}\label{PRO5} We have \begin{equation}\label{Q1} \displaystyle \psi_2(h) = O(h^\delta),~~\mbox{where}~\delta\in (0,1/2). \end{equation}
\end{proposition}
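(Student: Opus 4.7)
The plan is first to collapse the double sum defining $\psi_2(h)$ to a single deterministic quantity and then to bound that quantity by balancing the pointwise estimate $p_j\lesssim h$ against the global constraint $\sum_j p_j\le 1$ coming from the fact that $f$ is a density.

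Since the $\phi_i$ are i.i.d.\ with common density $f$, the average over $i$ is trivial and
$$\psi_2(h) \;=\; 2\sum_{j=1}^{J}\sqrt{p_j},\qquad p_j \;:=\; \mathbb{P}(\phi_1\in A_{Nj}) \;=\; \int_{A_{Nj}\cap A} f(u)\,du,$$
so the problem is purely deterministic. The hypotheses of Proposition \ref{PRO4}, namely $f\in\mathcal{F}'_b$ with compact support $A$, give the two complementary bounds I intend to use: $f$ is bounded (since $f'$ is bounded and $f$ is continuous on the compact set $A$), so $p_j\le \|f\|_\infty h$ for every $j$; and $\sum_{j=1}^{J} p_j\le \int_A f = 1$. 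The number of non-empty bins satisfies $J\le \lambda(A)/h + O(1)$.

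Next, I would split the sum according to whether the bin is ``interior'' or ``close to $\partial A$''. For a threshold $h^{1-\gamma}$ with $\gamma\in(0,1)$, the boundary bins (those within distance $h^{1-\gamma}$ of $\partial A$) satisfy $f(\xi_j)\le \|f'\|_\infty\operatorname{dist}(\xi_j,\partial A)\le \|f'\|_\infty h^{1-\gamma}$ by the Lipschitz property of $f$ (recall $f$ vanishes at $\partial A$ since it is continuous with compact support); together with $p_j = h\,f(\xi_j)$ from the mean-value theorem this yields $p_j\lesssim h^{2-\gamma}$ on those bins, of which there are $O(h^{-\gamma})$, so their contribution to $\sum\sqrt{p_j}$ is $O(h^{-\gamma}\cdot h^{(2-\gamma)/2})=O(h^{1-3\gamma/2})$. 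For the interior bins I would then apply H\"older's inequality combining $\sum p_j\le 1$ with the refined bound $p_j\le \|f\|_\infty h$, to trade the interior count against the total mass.

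The main obstacle is exactly this last step. A naive Cauchy--Schwarz gives only $\sum_j\sqrt{p_j}\le \sqrt{J\sum_j p_j}=O(h^{-1/2})$, which has the wrong sign of exponent. To push this to $O(h^\delta)$ with $\delta\in(0,1/2)$ one must genuinely exploit the smoothness of $f$ (that is, the boundedness of $f'$ and the compactness of the support), and the splitting threshold $\gamma$ has to be tuned to the target $\delta$: choosing $\gamma$ small makes the boundary contribution small while choosing $\gamma$ large makes the interior part manageable, and optimising yields $\psi_2(h)\le C(\delta)h^{\delta}$ for any prescribed $\delta\in(0,1/2)$. I would conclude by verifying the optimisation and absorbing all constants into $C(\delta)$.
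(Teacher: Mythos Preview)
Your proposal has a genuine gap, located exactly where you yourself flag ``the main obstacle.'' The boundary/interior decomposition is sound and your boundary contribution $O(h^{1-3\gamma/2})$ is correct, but the interior part cannot be salvaged by any choice of $\gamma$ or any H\"older interpolation between $\sum_j p_j\le1$ and $p_j\le\|f\|_\infty h$. The reason is that, under the stated hypotheses, the claimed bound is simply false. Take any $f\in\mathcal{F}'_b$ with compact support that stays bounded below by some $c>0$ on a subinterval $I\subset A$ of length $\ell>0$; for instance $f(x)=\tfrac{15}{16}(1-x^2)^2\chi_{[-1,1]}(x)$, which satisfies $f\ge 1/2$ on $[-1/2,1/2]$. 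For every bin $A_{Nj}\subset I$ one has $p_j\ge ch$, and there are at least $\lfloor\ell/h\rfloor$ such bins, so
\[
\psi_2(h)\;=\;2\sum_{j=1}^{J}\sqrt{p_j}\;\ge\;2\Bigl\lfloor\tfrac{\ell}{h}\Bigr\rfloor\sqrt{c\,h}\;\gtrsim\;h^{-1/2}\;\longrightarrow\;\infty .
\]
Thus the $O(h^{-1/2})$ rate you obtained from Cauchy--Schwarz is in fact \emph{sharp}, and no amount of smoothness of $f$ reverses the sign of the exponent: the ``optimising $\gamma$'' step you outline cannot be carried out, because on the interior bins nothing better than $p_j\asymp h$ is available.

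For comparison, the paper's own argument proceeds by writing $p_j=p_j^{\,\delta^*}p_j^{\,\delta}$ with $\delta+\delta^*=1$ and then asserting, without justification, that $\bigl[\sup_{i,j}\mathbb{P}(\phi_i\in A_{Nj})\bigr]^{\delta^*}\le e^{-j\delta^*}/j!$ for all small $h$. That inequality plays exactly the role of your missing interior estimate, and it fails for the same counterexample: the left-hand side is of order $h^{\delta^*}$, \emph{independent of $j$}, while the right-hand side is summable in $j$. So neither your approach nor the paper's delivers the stated conclusion; the proposition (and the parallel claim in Lemma~\ref{LEM3}) appears to require additional, unstated hypotheses on $f$ beyond $f\in\mathcal{F}'_b$ with compact support.
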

\begin{proof}\label{proof.PRO5} Let $\displaystyle \delta,\delta^*\in (0,1)$ such that $\delta+\delta^*=1$. It is easy to see that \begin{eqnarray*} \displaystyle  \mathbb{P}\left(\phi_i\in A_{Nj} \right) &=& \mathbb{P}\left(\phi_i\in A_{Nj} \right)^{\delta^*} \left(\int_{hj}^{h(j+1)} f(t)dt\right)^{\delta}\\ \displaystyle  &\leq & \left[\sup_{i,j}\mathbb{P}\left(\phi_i\in A_{Nj} \right)\right]^{\delta^*}\sup_{t}f(t)^{\delta}h^\delta\\ \displaystyle  &\leq & \frac{e^{-j\delta^*}}{j!}\sup_{t}f(t)^{\delta}h^\delta,~~h\in (0,h_0), \end{eqnarray*}where $h_0$ is some non-negative number independent of $i$ and $j$. Thereby, \begin{equation*} \displaystyle \psi_2(h) = \sup_{t}f(t)^{\delta/2}h^{\delta/2} \sum_{j\geq 1}\frac{e^{-j\delta^*/2}}{\sqrt{j!}}<\infty. \end{equation*} \end{proof} Let $\displaystyle T=T(N)\geq J^4$ so that \begin{equation}\label{Q2} \displaystyle \frac{dJ}{h^2\sqrt[4]{T}} = O\left(h^{\delta'} \right),~\mbox{and set }~  h\propto N^{-\delta''}. \end{equation} As mentioned in \cite[Theorem 6]{Devroye and Gyorfi 1985}, \begin{equation}\label{Q3} \displaystyle \psi_1(N,h)+o\left( h+\frac{1}{\sqrt{Nh}}\right) = O\left(N^{-1/3} \right). \end{equation} Fitting rates of convergence given in (\ref{Q1}),(\ref{Q2}) and (\ref{Q3}), we choose $\delta'=\delta$, $\delta''=1/(3\delta)$. An arbitrary choice of $\delta$ may violate the crucial condition $Nh\longrightarrow \infty$ as $h\rightarrow 0$. Choosing $\displaystyle \delta\in \left(1/3,1/2 \right)$, we guarantee that all conditions on $T$, $J$, $N$ and $h$ are fulfilled. Finally $\displaystyle \mathbb{E}\norm{\widehat{f}_h^{(3)}-f}_1 = O\left(N^{-1/3} \right)$. In a similar fashion, we can prove that $\widehat{f}_h^{(4)}$ as well as  $\widehat{f}_h^{(3)}$ have the same rates of convergence .

 \section{Numerical simulation}\label{Sec.3.7} As an example, we consider the following Langevin equation as dynamics of the subject $X^i$: \begin{eqnarray}\label{EXX} \displaystyle dX^i(t) &=& \left( -\lambda X^i(t)+\phi_i b(t)\right)dt+\sigma dW^{H,i}(t),~t\leq T\\ \nonumber\displaystyle X^i(0) &=& x^i\in\mathds{R}, \end{eqnarray}
 where $H>1/2$, $\lambda,~\sigma >0$ and $\phi_i$ is a random variable such that $\mathbb{E}\abs{\phi_i}^4<\infty$, $i=1,\ldots,N$. Assume that $b_1^2\leq b(t)^2\leq b_2^2$, for all $t\leq T$. The common density $f$ of $\phi_i$ can be estimated by $\widehat{f}_h^{(1)}$ and $\widehat{f}_h^{(2)}$, since both \textbf{A}$_1$ and \textbf{A'}$_2$ hold true. We shall only show that \textbf{A'}$_2$ holds:\bigskip

First, note that $\displaystyle X^i(t)=e^{-\lambda t}\left(x^i+\phi_i\int_0^t e^{\lambda s} b(s)ds +\sigma\int_0^t e^{\lambda s}dW^{H,i}(s) \right)$ is a solution to Equation (\ref{EXX}). Set $\displaystyle X^{i,H}_\xi (t):=\sigma\int_{-\infty}^t e^{-\lambda (t-s)}dW^{H,i}(s)$. The process $X^{i,H}_\xi$ solves the equation $dX^i(t)=-\lambda X^i(t) dt+\sigma dW^{H,i}(t)$, with initial value $\xi=X^{i,H}_\xi (0)$. It is clear that $X^{i,H}_\xi$ is a stationary Gaussian process. From \cite[Theorem 2.3]{Cheridito et al 2003}, it follows that $X^{i,H}_\xi$ is ergodic. Therefore, the ergodic theorem implies that \begin{equation*} \displaystyle \frac{1}{T}\int_0^T X^{i,H}_\xi (t)^4 dt \longrightarrow \mathbb{E}\xi^4~~\mbox{as}~T\rightarrow \infty \end{equation*} almost surely and in $L^2$. Set $Z^i(t)=X^i(t)-\phi_i \int_0^t e^{-\lambda (t-s)}b(s)ds $. Simple computations show that $ \displaystyle Z^i(t)=e^{-\lambda t}(x^i-\xi)+X^{i,H}_\xi (t) $. Hence
 \begin{eqnarray*}
 \displaystyle \frac{1}{T}\mathbb{E}\left( \int_0^T \left(-\lambda X^i(t)/\sigma \right)^2 dt\right)^ 2  ~~~~~~~~~~~~~~~~~~~~~~~~~~~~~~~~~~~~~~~~~~~~~~~~~~~~~~~~~~~~~~~~~~~~~~&~&\\
  \displaystyle \leq \frac{8\lambda^4}{\sigma^4 T}\left\lbrace \mathbb{E}\int_0^T Z^i(t)^4 dt+\int_0^T \mathbb{E}\left(\phi_i\int_0^t e^{-\lambda (t-s)}b(s)ds \right)^4 dt \right\rbrace ~~~~~~~~~~~~~~~~~~~~~~~~~~~~~~~~~~&~&\\
  \displaystyle \leq \frac{64\lambda^4}{\sigma^4 T}\mathbb{E}\int_0^T X^{i,H}_\xi (t)^4dt+\frac{64\lambda^4}{\sigma^4 T}\int_0^T e^{-4\lambda t}dt\mathbb{E}(x^i-\xi)^4+\frac{8\lambda^4 b_1^4}{\sigma^4 T}\int_0^T\int_0^t e^{-4\lambda(t-s)}ds dt \mathbb{E}\phi_i^4 ~&~&\\
 \displaystyle \leq \frac{64\lambda^4}{\sigma^4 }\mathbb{E}\left\lbrace \frac{1}{T}\int_0^T X^{i,H}_\xi (t)^4 dt\right\rbrace+ \frac{16\lambda^3}{\sigma^4 T}\mathbb{E}(x^i-\xi)^4+\frac{2\lambda^3 b_1^4}{\sigma^4} \mathbb{E}\phi_i^4 ~~~~~~~~~~~~~~~~~~~~~~~~~~~~~~~~~~~~&~&\\
  \displaystyle \longrightarrow \frac{64\lambda^4}{\sigma^4 } \mathbb{E}\xi^4+\frac{2\lambda^3 b_1^4}{\sigma^4} \mathbb{E}\phi_i^4~\mbox{as}~T\rightarrow \infty, ~~~~~~~~~~~~~~~~~~~~~~~~~~~~~~~~~~~~~~~~~&~&
 \end{eqnarray*}which in turn implies that
 \begin{equation*}
  \displaystyle \lim_{T\rightarrow\infty} \frac{1}{T^{2-\delta}\log (T)}\mathbb{E}\left( \int_0^T \left(-\lambda X^i(t)/\sigma \right)^2 dt\right)^2=0,~\mbox{for all}~\delta\in (0,1). \end{equation*}


  For illustration, we simulate model (\ref{EXX}) with $b(t)=\sigma=1$, estimate the densities of the random effects and compare these to the true data-generating density. In detail, we use up to 25 exact simulations with $\lambda=3\times 10^{-3}$, $x^i=0$, $N=1000$ and $T=100;10$.
   The random effects are Gaussian distributed, $\mathcal{N}(1,0.8)$, and Gamma distributed, $\Gamma(2,0.9)$, where $2$ is the shape parameter and $0.9$ the scale parameter. Figures~\ref{Fig1}, \ref{Fig2}, \ref{Fig3} and~\ref{Fig4} display the estimates~$\widehat{f}_h^{(1)}$ and~$\widehat{f}_h^{(3)}$ for different values of the Hurst index, $H\in\{0.25,0.75,0.85\}$ and $T=100;10$. Improving the accuracy of our estimators requires that both $N$ and $T$ be sufficiently large. However, for $T$ being only moderately large ( say $T=10$ ) and/or $H<1/2$ (which is not supported by our theoritical framework), the estimated curves match the theoretical curves satisfyingly well. The estimators $\widehat{f}_h^{(2)}$ and $\widehat{f}_h^{(4)}$ lead to similar results, thus we omitted them.
    However, for the current example where $a(\cdot)$ is known, $\widehat{f}_h^{(1)}$ and $\widehat{f}_h^{(3)}$ are recommended:
    For~$\widehat{f}_h^{(2)}$ and $\widehat{f}_h^{(4)}$, we may relax the assumptions \textbf{A}$_2$ and \textbf{A'}$_2$, but the results are more time-consuming as we need to compute
     $\widehat{\phi}_{i,T}$ and ${R_T^{(i)}}/{U_T^{(2)}}$, while $\widehat{f}_h^{(1)}$ and $\widehat{f}_h^{(3)}$ require only $\widehat{\phi}_{i,T}$.
\begin{figure}
  \begin{center}
  \begin{tabular}{cccc}
  & {\bf $H=0.25$} & {\bf $H=0.75$} & {\bf $H=0.85$}\\
  \rotatebox{90}{\parbox[c]{67mm}{\hspace*{10mm} \bf Gaussian random effects}}
    &  \includegraphics[scale=0.95,totalheight=7cm,width=0.36\textwidth]{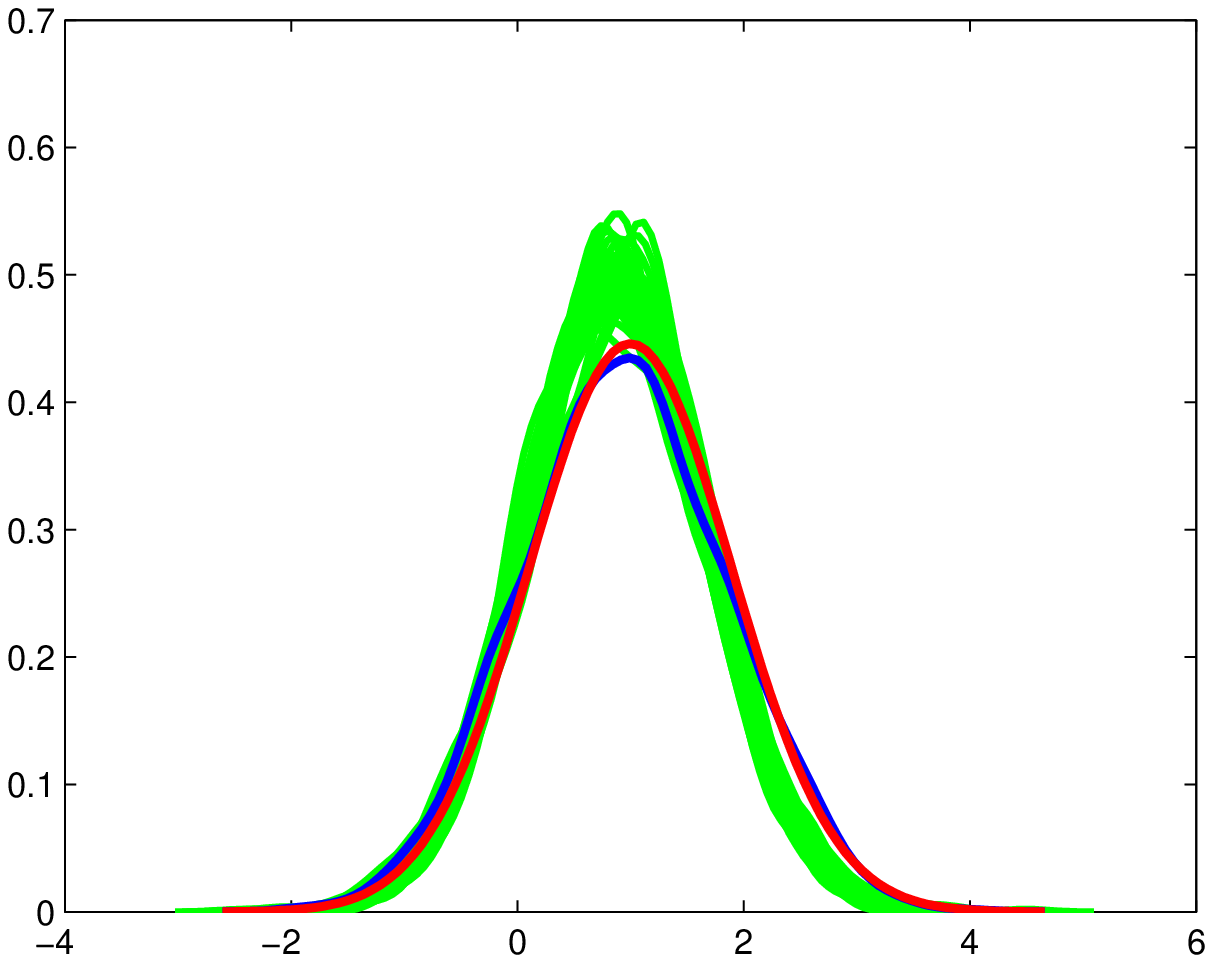}
   &  \hspace{-0.7cm}\includegraphics[scale=0.95,totalheight=7cm,width=0.36\textwidth]{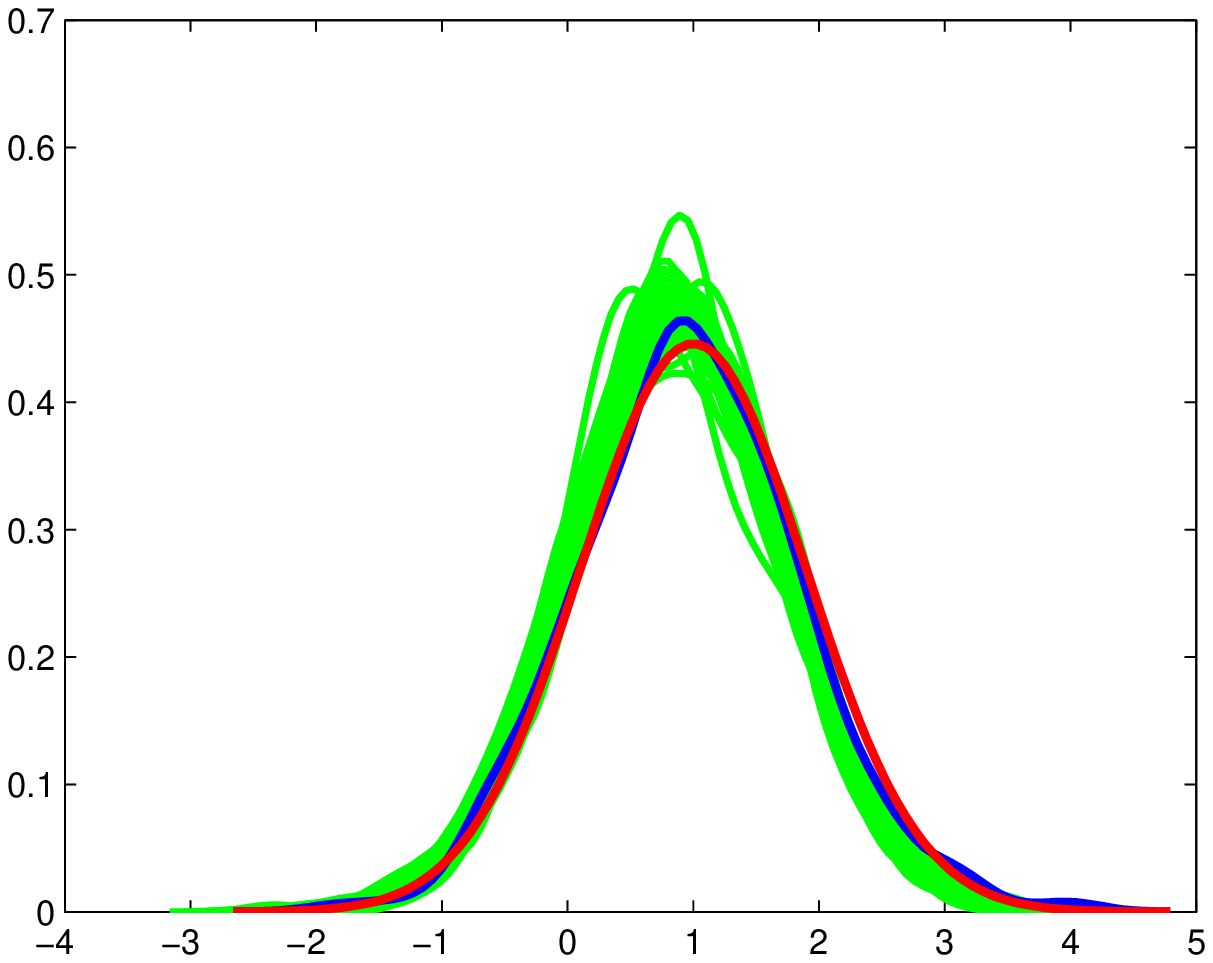}
    & \hspace{-0.7cm} \includegraphics[scale=0.95,totalheight=7cm,width=0.36\textwidth]{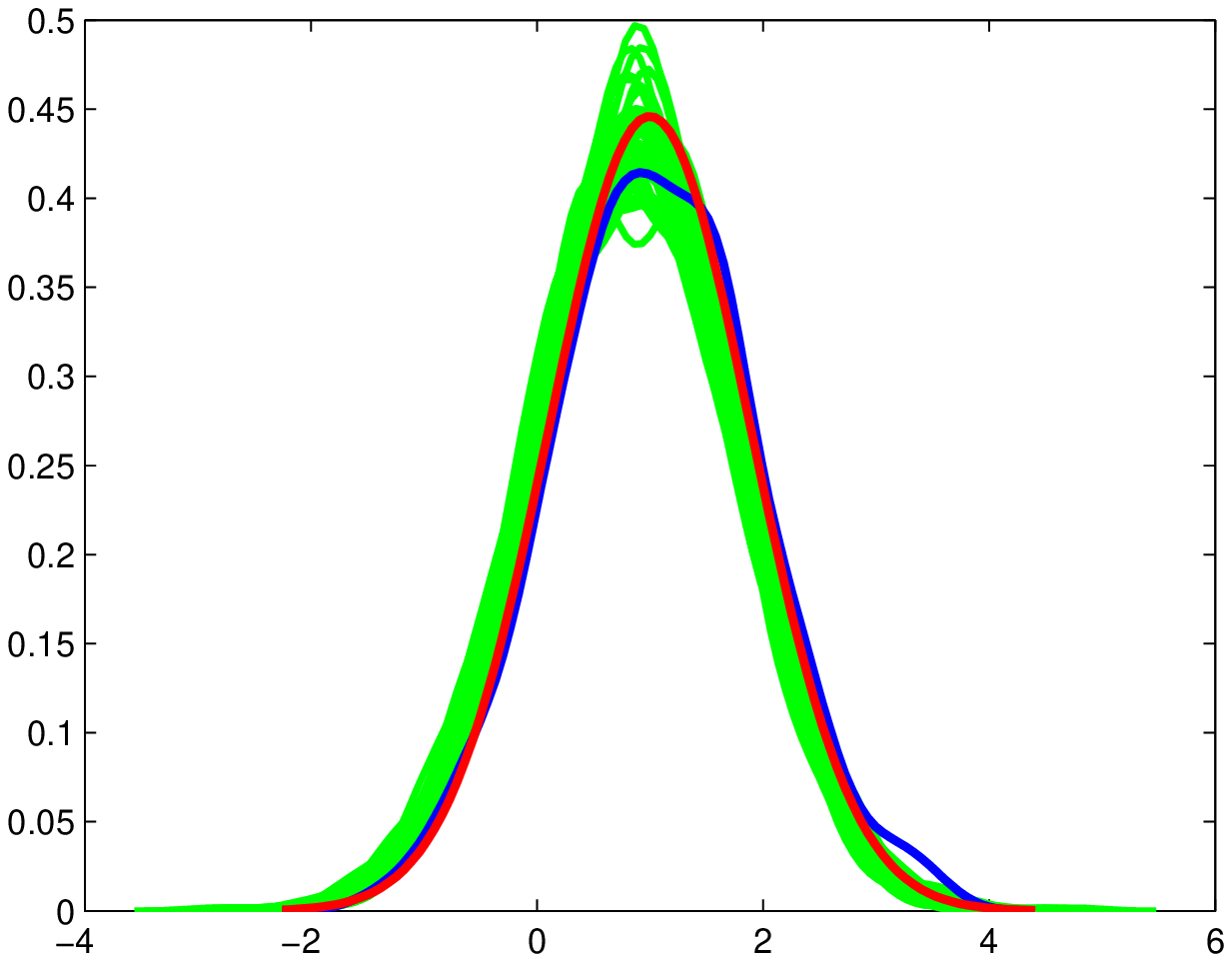}\\
  \rotatebox{90}{\parbox[c]{67mm}{\hspace*{10mm} \bf Gamma random effects}}
  &  \includegraphics[scale=0.95,totalheight=7cm,width=0.36\textwidth]{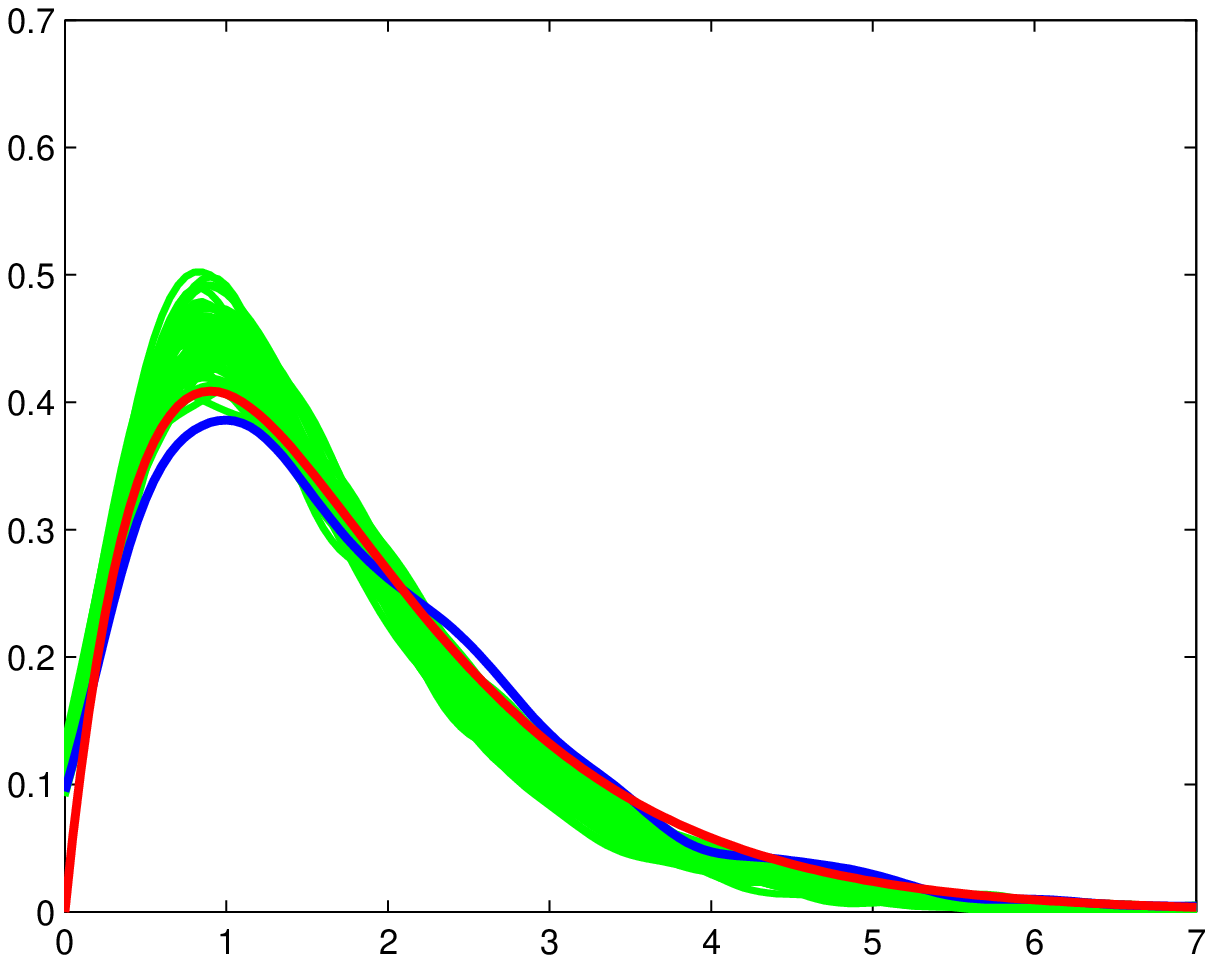}
  & \hspace{-0.7cm}\includegraphics[scale=0.95,totalheight=7cm,width=0.36\textwidth ]{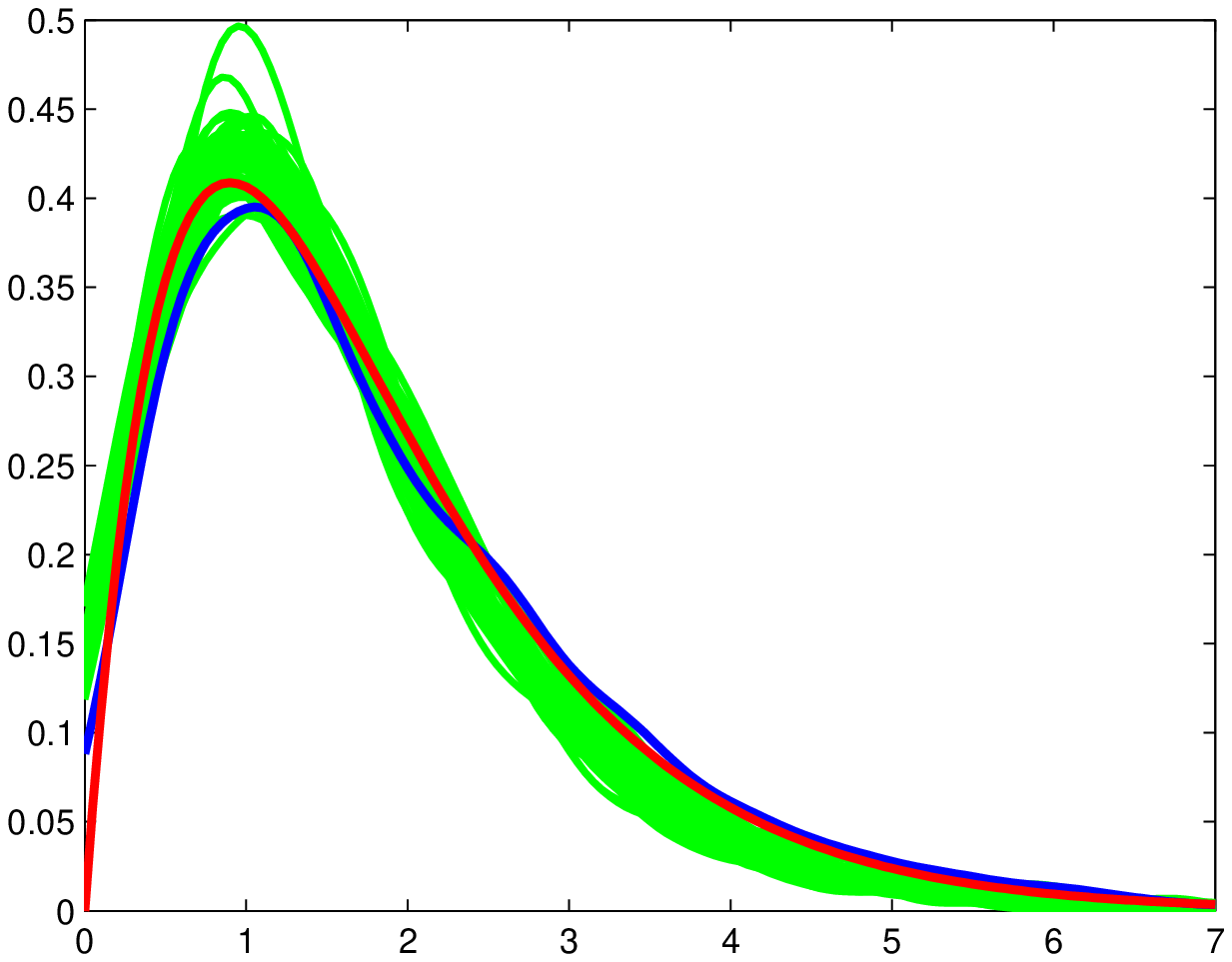}
  & \hspace{-0.7cm}\includegraphics[scale=0.95,totalheight=7cm,width=0.36\textwidth ]{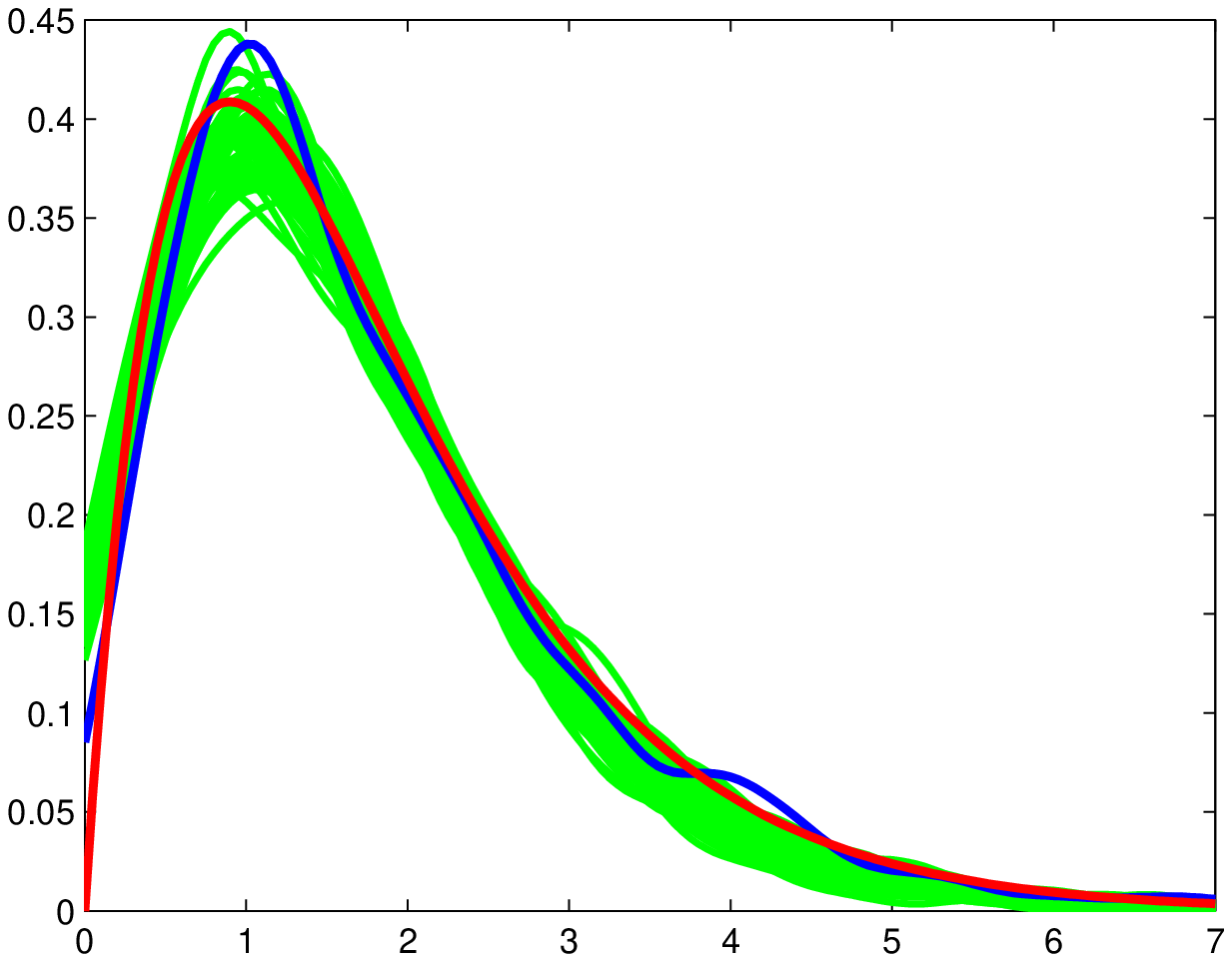}
   \end{tabular}

\caption{{\color{blue}{\bf Kernel estimates $\hat{f}^{(1)}_h$ for Ornstein-Uhlenbeck process with additive random effects:}
We drew 50 i.i.d. realizations of model (\ref{EXX}) for each of the following settings. \emph{First row:} Gaussian distributed random effects, \emph{second row:} gamma distributed random effects, \emph{columns:} different values for the Hurst index~$H$. The thin green lines show the 25 estimated kernel estimates~$\hat{f}^{(1)}_h$. The true density is shown in bold red, and a standard kernel density estimator for one sample of $\phi_i$'s (which is unobserved in a real-case scenario) in blue bold. We chose $N = 1000$ and $T = 100$. For more details, see Section~\ref{Sec.3.7}.}
}
\label{Fig1}
  \end{center}
\end{figure}

\begin{figure}
  \begin{center}
  \begin{tabular}{cccc}
  & {\bf $H=0.25$} & {\bf $H=0.75$} & {\bf $H=0.85$}\\
  \rotatebox{90}{\parbox[c]{67mm}{\hspace*{10mm} \bf Gaussian random effects}}
    &  \includegraphics[scale=0.95,totalheight=7cm,width=0.36\textwidth]{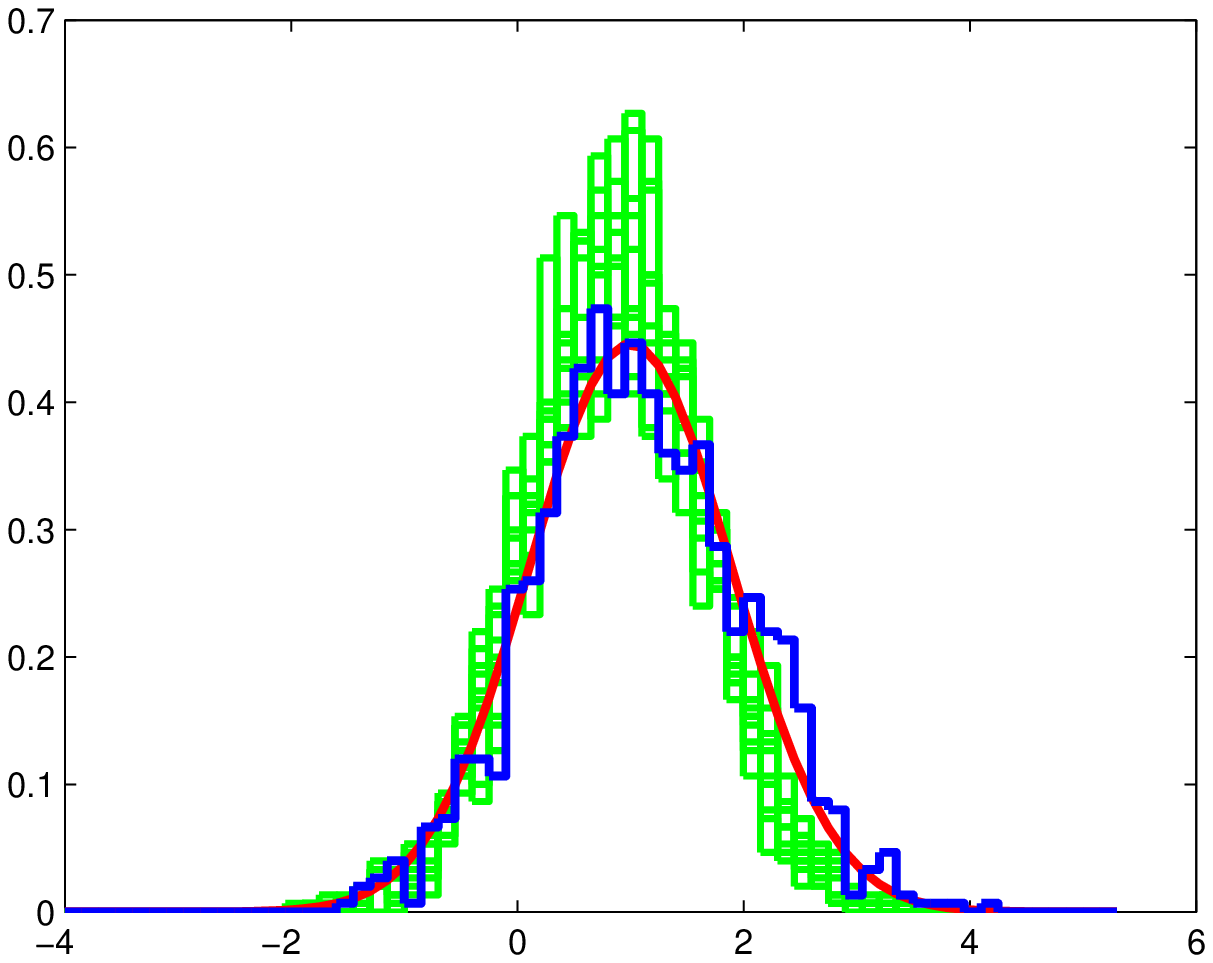}
    & \hspace{-0.7cm} \includegraphics[scale=0.95,totalheight=7cm,width=0.36\textwidth]{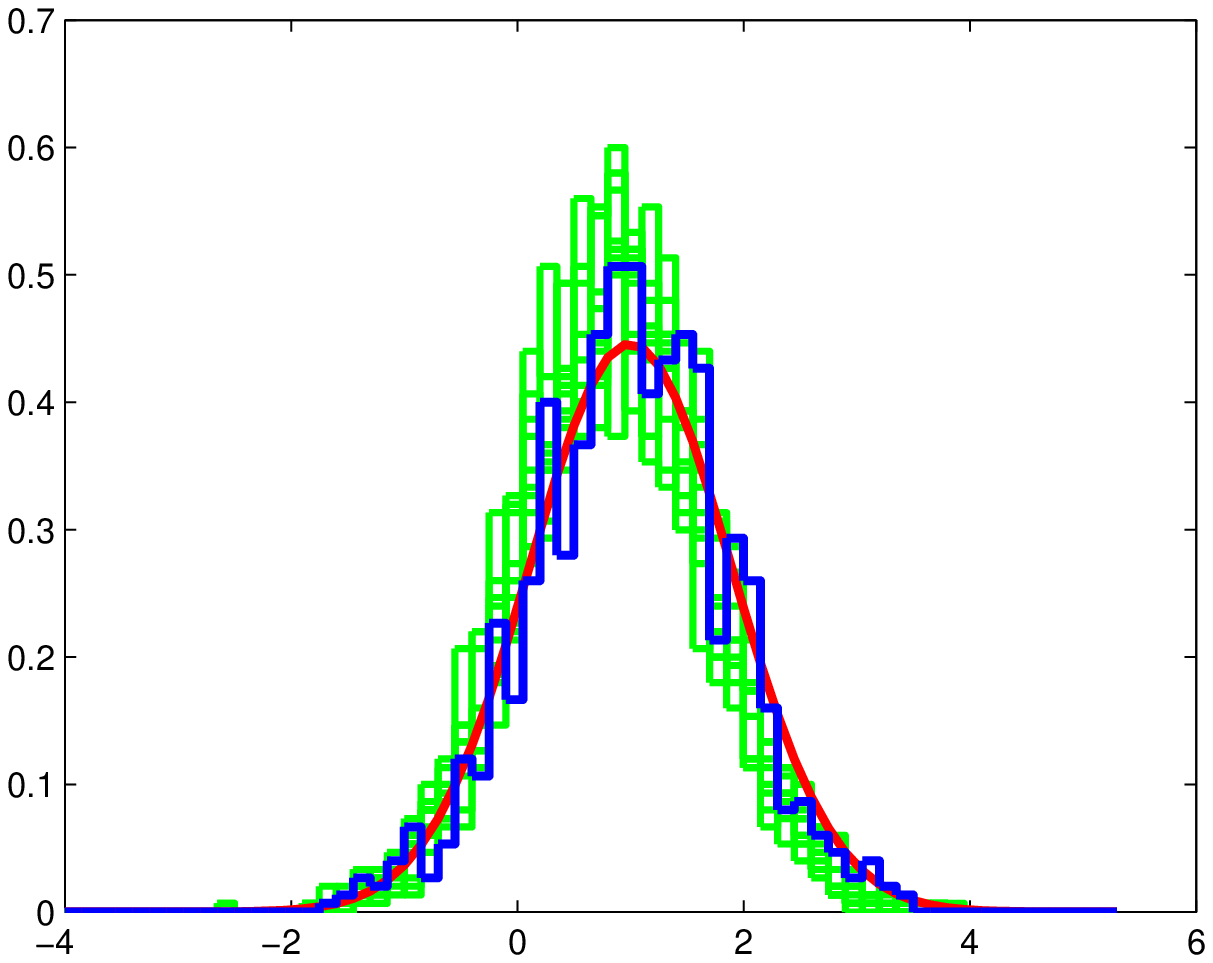}
    & \hspace{-0.7cm} \includegraphics[scale=0.95,totalheight=7cm,width=0.36\textwidth]{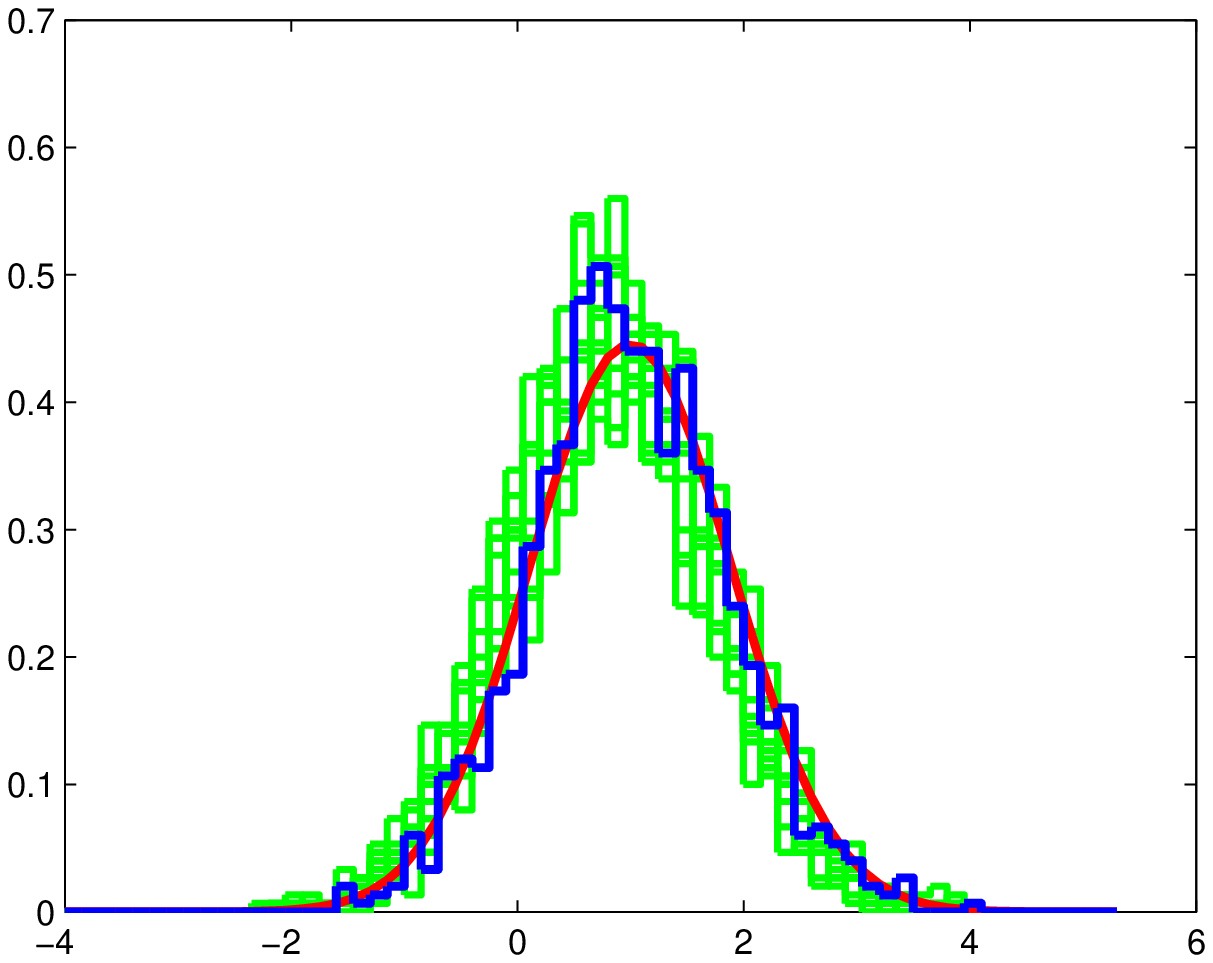}\\
  \rotatebox{90}{\parbox[c]{67mm}{\hspace*{10mm} \bf Gamma random effects}}
  &  \includegraphics[scale=0.95,totalheight=7cm,width=0.36\textwidth]{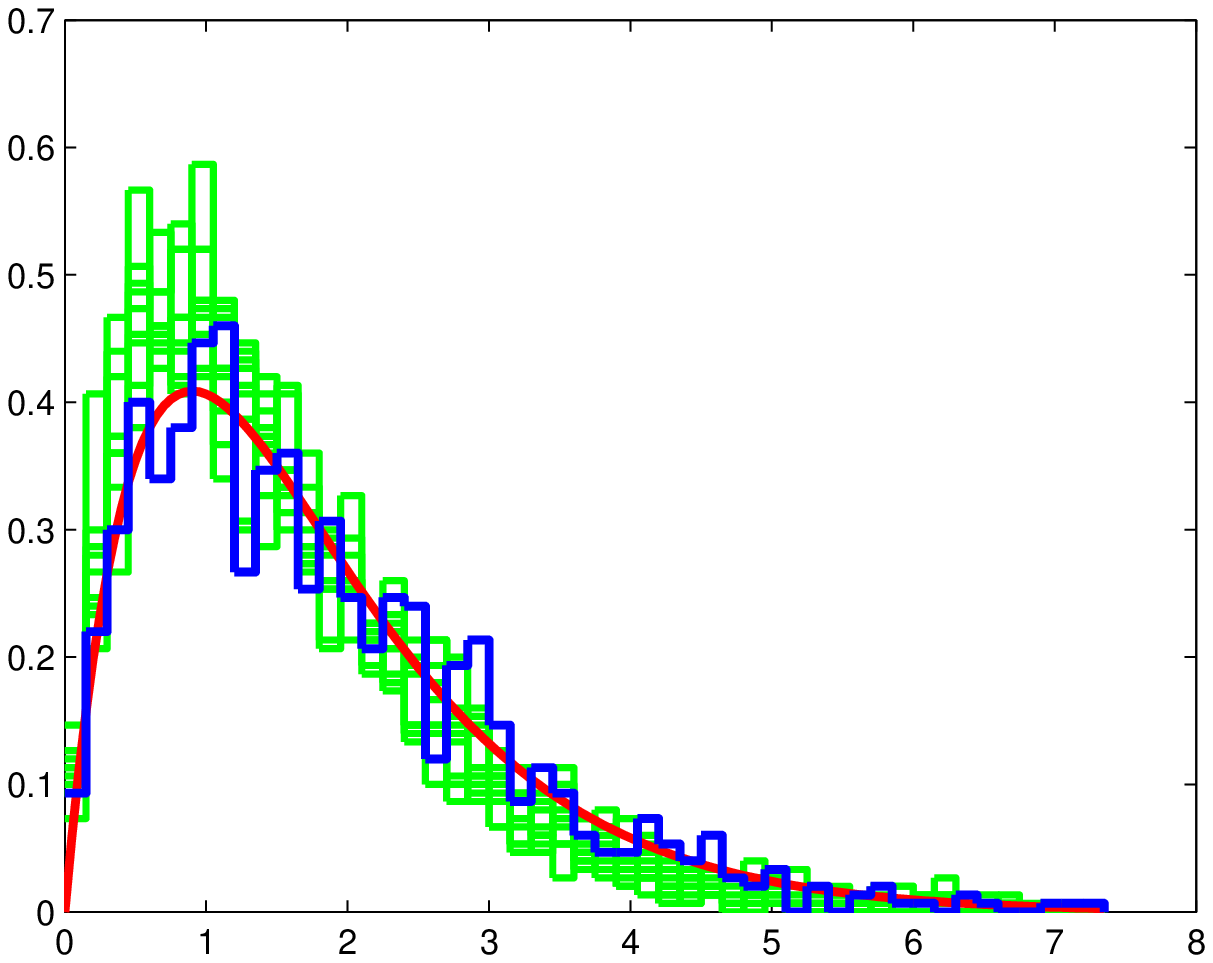}
  & \hspace{-0.7cm}\includegraphics[scale=0.95,totalheight=7cm,width=0.36\textwidth ]{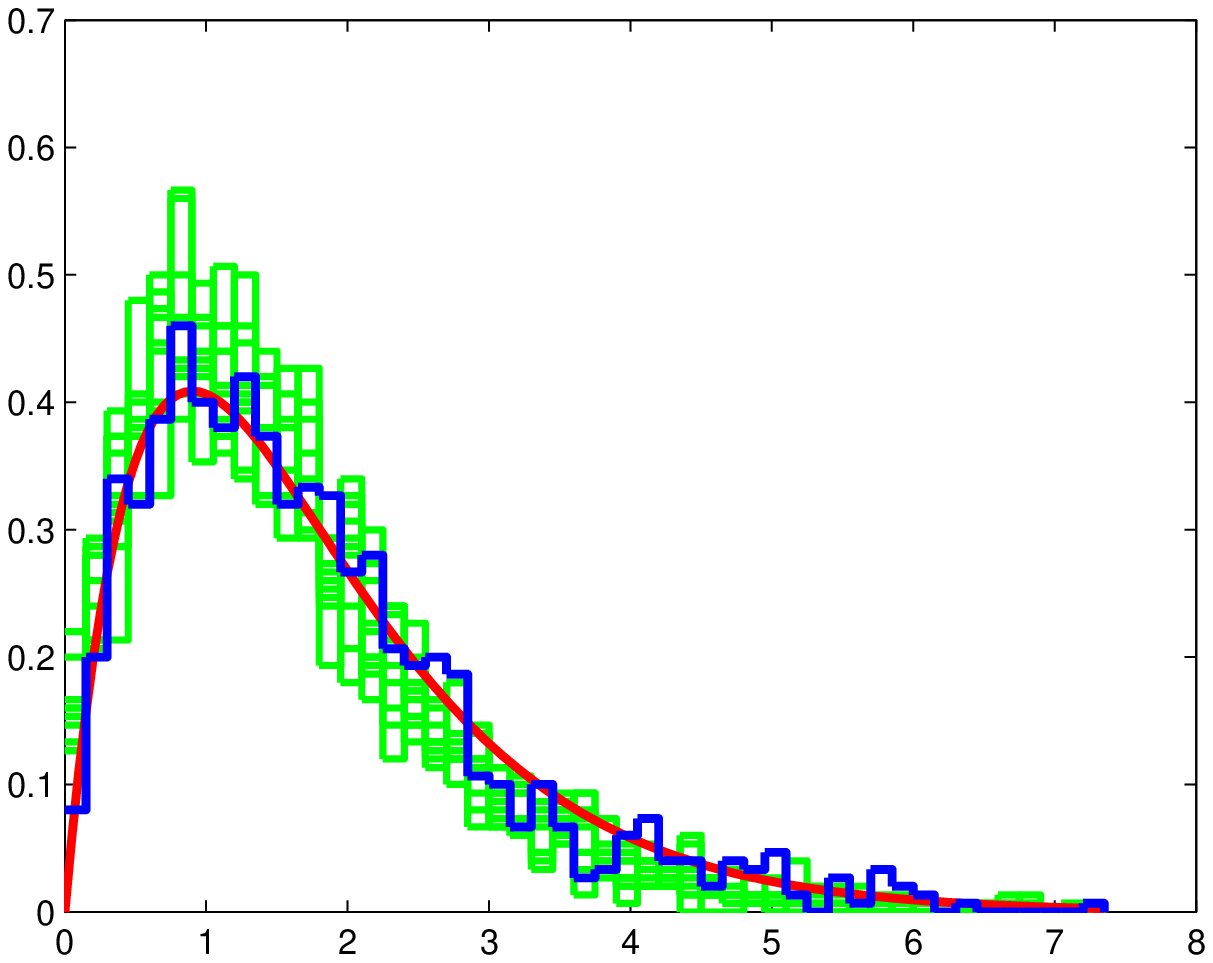}
  & \hspace{-0.7cm}\includegraphics[scale=0.95,totalheight=7cm,width=0.36\textwidth ]{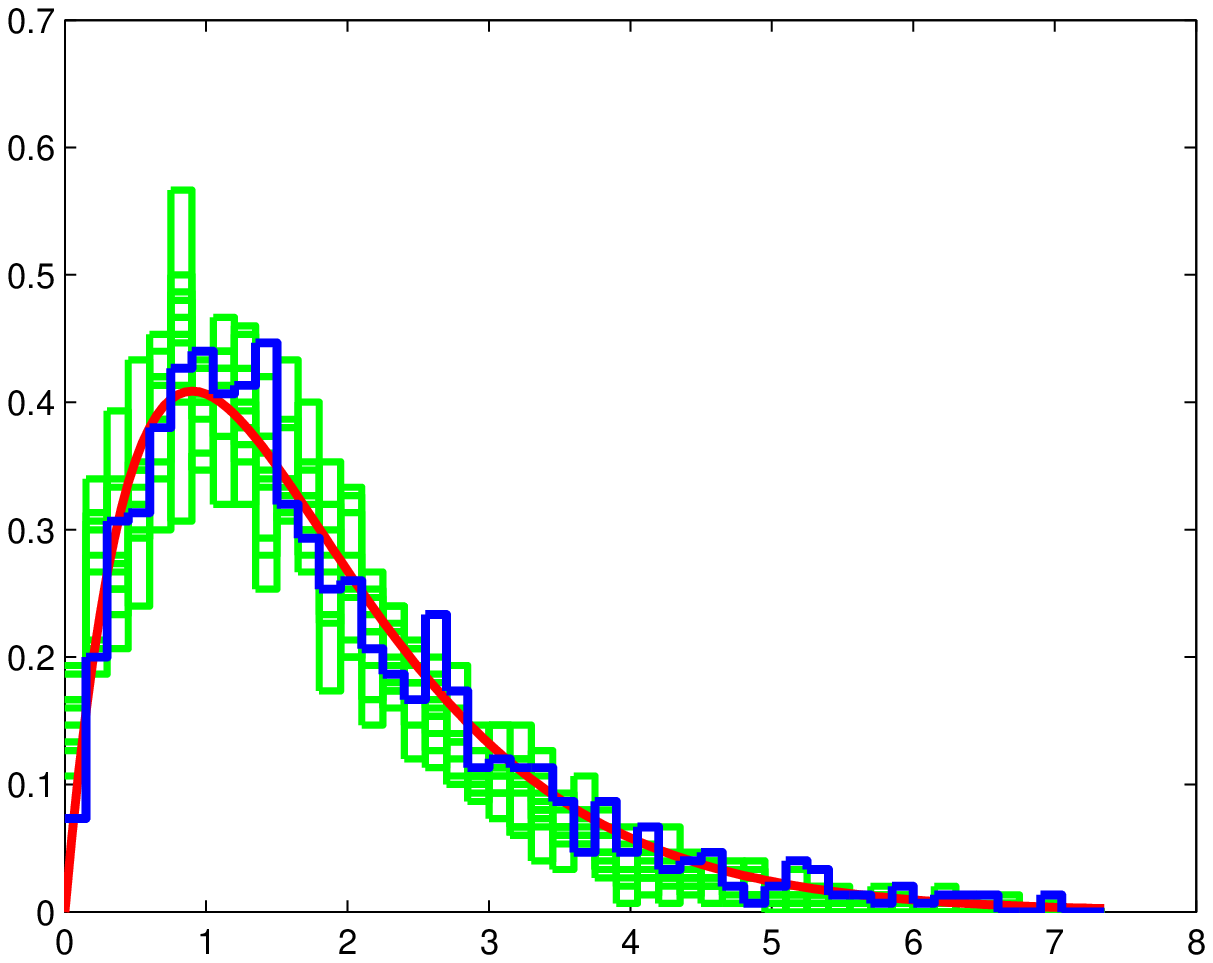}
   \end{tabular}

\caption{{\color{blue}{\bf Histogram estimates $\hat{f}^{(3)}_h$ for Ornstein-Uhlenbeck process with additive random effects:}
We drew 10 i.i.d. realizations of model (\ref{EXX}) for each of the following settings. \emph{First row:} Gaussian distributed random effects, \emph{second row:} gamma distributed random effects, \emph{columns:} different values for the Hurst index~$H$. The thin green lines show the 10 estimated histogram estimates~$\hat{f}^{(3)}_h$. The true density is shown in bold red, and an exact histogram for one sample of $\phi_i$'s (which is unobserved in a real-case scenario) in blue bold. We chose $N = 1000$ and $T = 100$. For more details, see Section~\ref{Sec.3.7}.}
}
\label{Fig2}
  \end{center}
\end{figure}

\begin{figure}
  \begin{center}
  \begin{tabular}{cccc}
  & {\bf $H=0.25$} & {\bf $H=0.75$} & {\bf $H=0.85$}\\
  \rotatebox{90}{\parbox[c]{67mm}{\hspace*{10mm} \bf Gaussian random effects}}
    &  \includegraphics[scale=0.95,totalheight=7cm,width=0.36\textwidth]{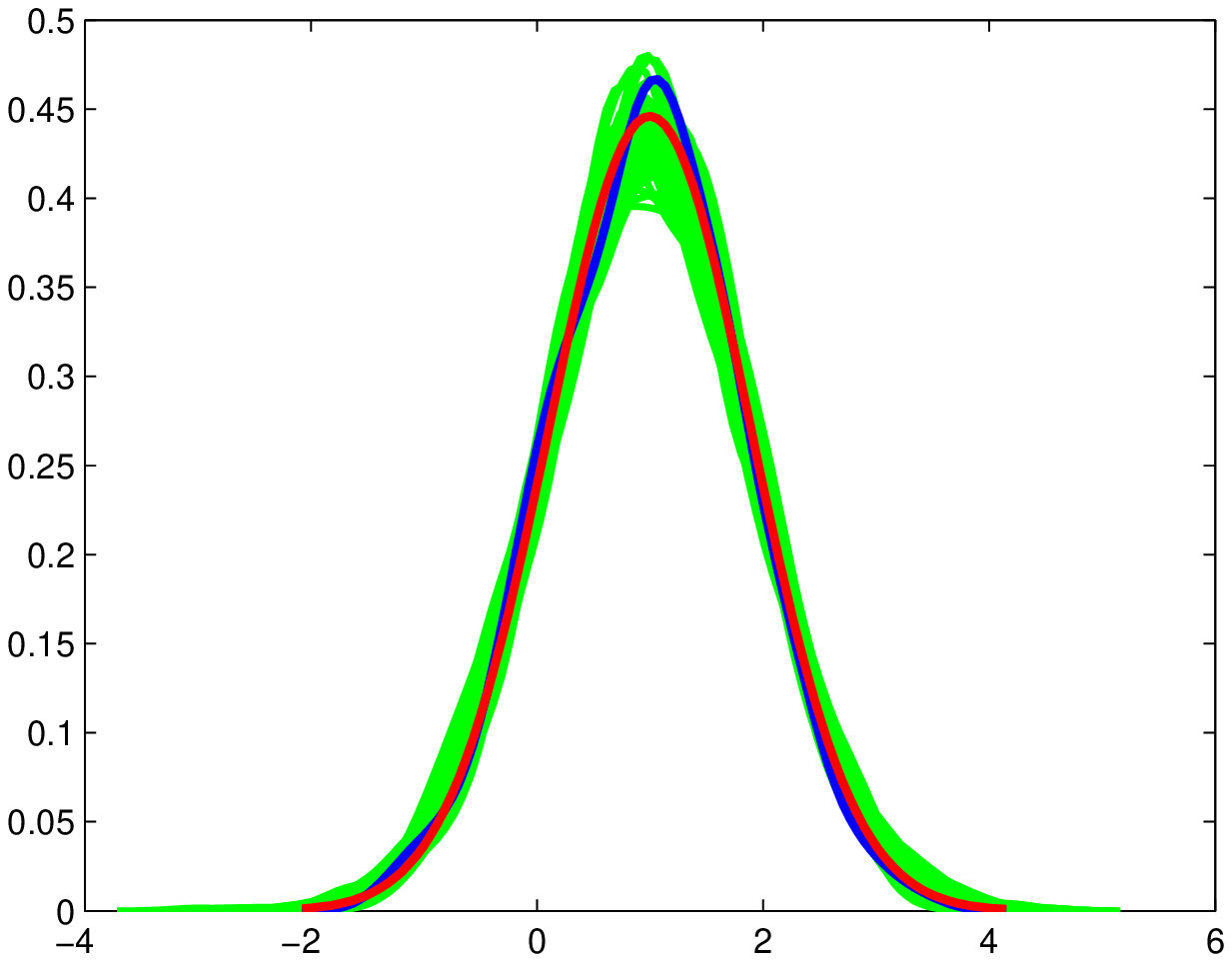}
   &  \hspace{-0.7cm}\includegraphics[scale=0.95,totalheight=7cm,width=0.36\textwidth]{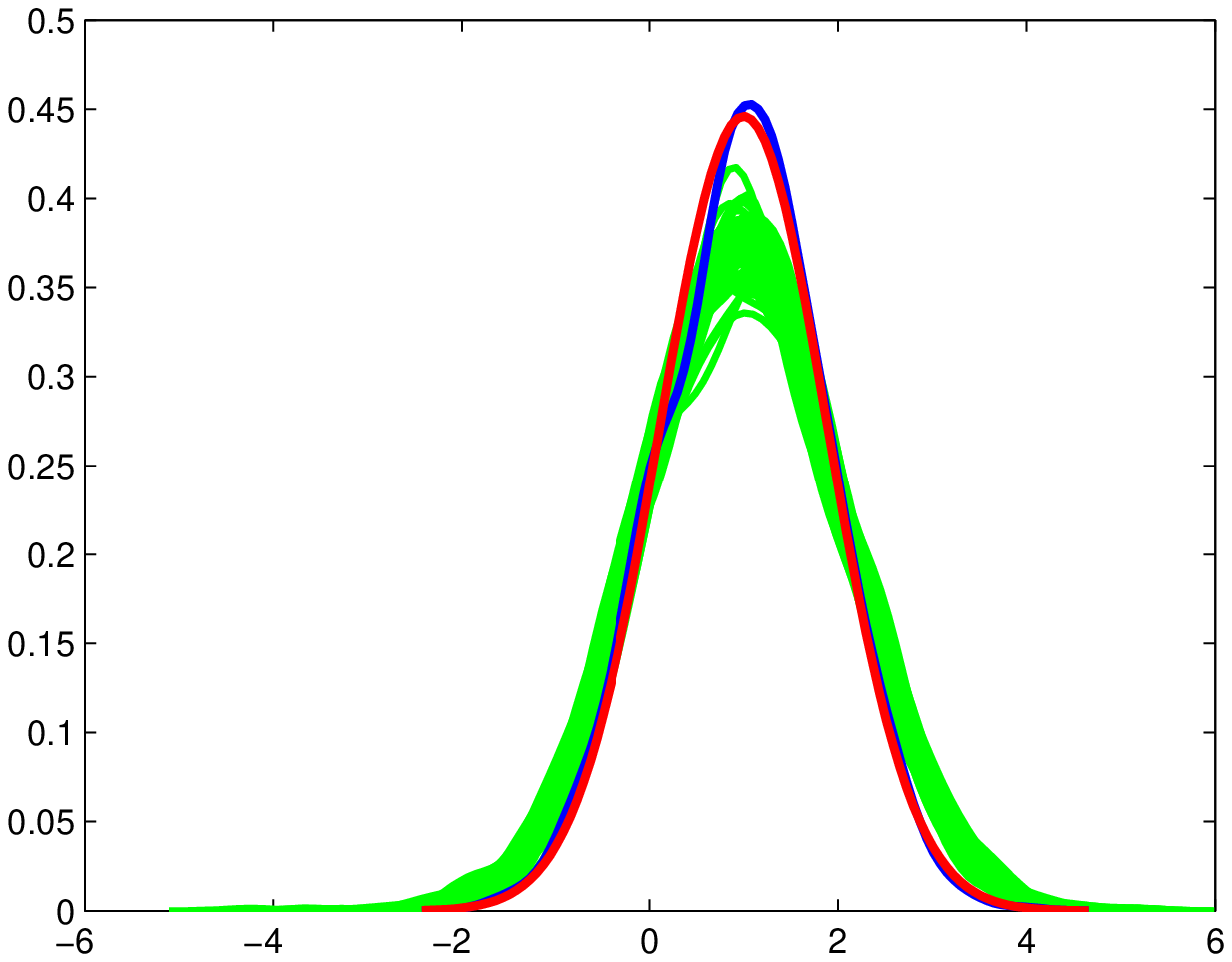}
    & \hspace{-0.7cm} \includegraphics[scale=0.95,totalheight=7cm,width=0.36\textwidth]{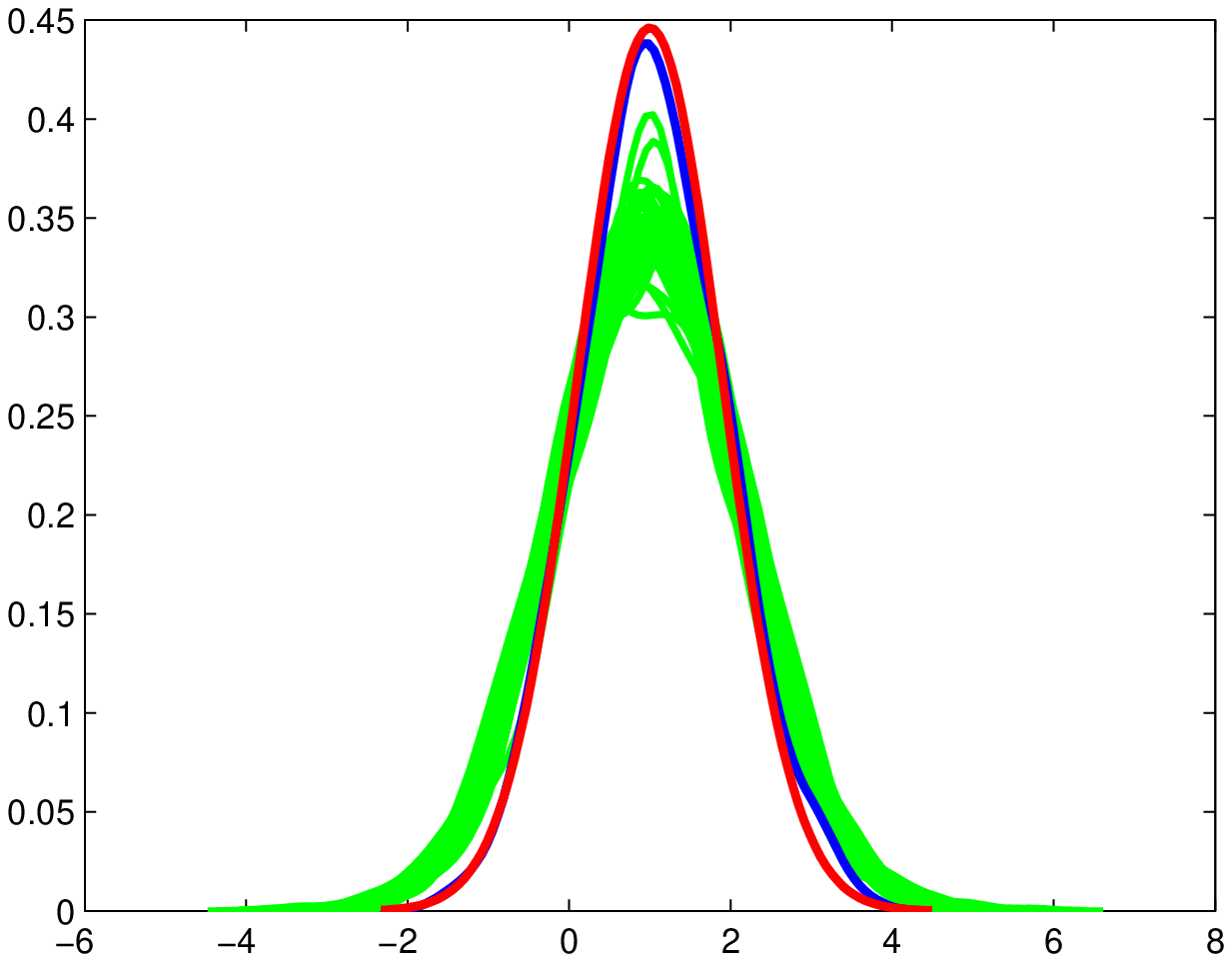}\\
  \rotatebox{90}{\parbox[c]{67mm}{\hspace*{10mm} \bf Gamma random effects}}
  &  \includegraphics[scale=0.95,totalheight=7cm,width=0.36\textwidth]{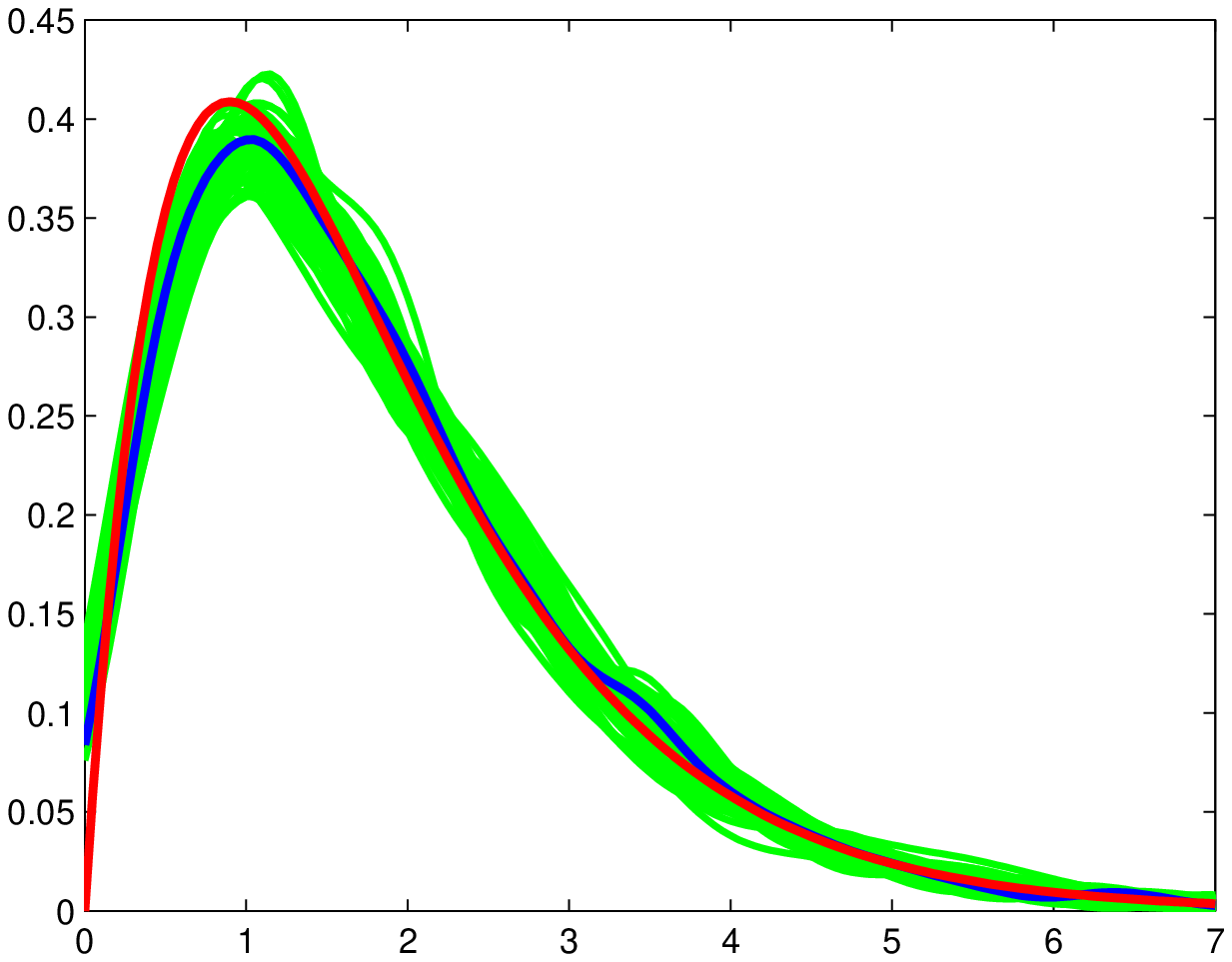}
  & \hspace{-0.7cm}\includegraphics[scale=0.95,totalheight=7cm,width=0.36\textwidth ]{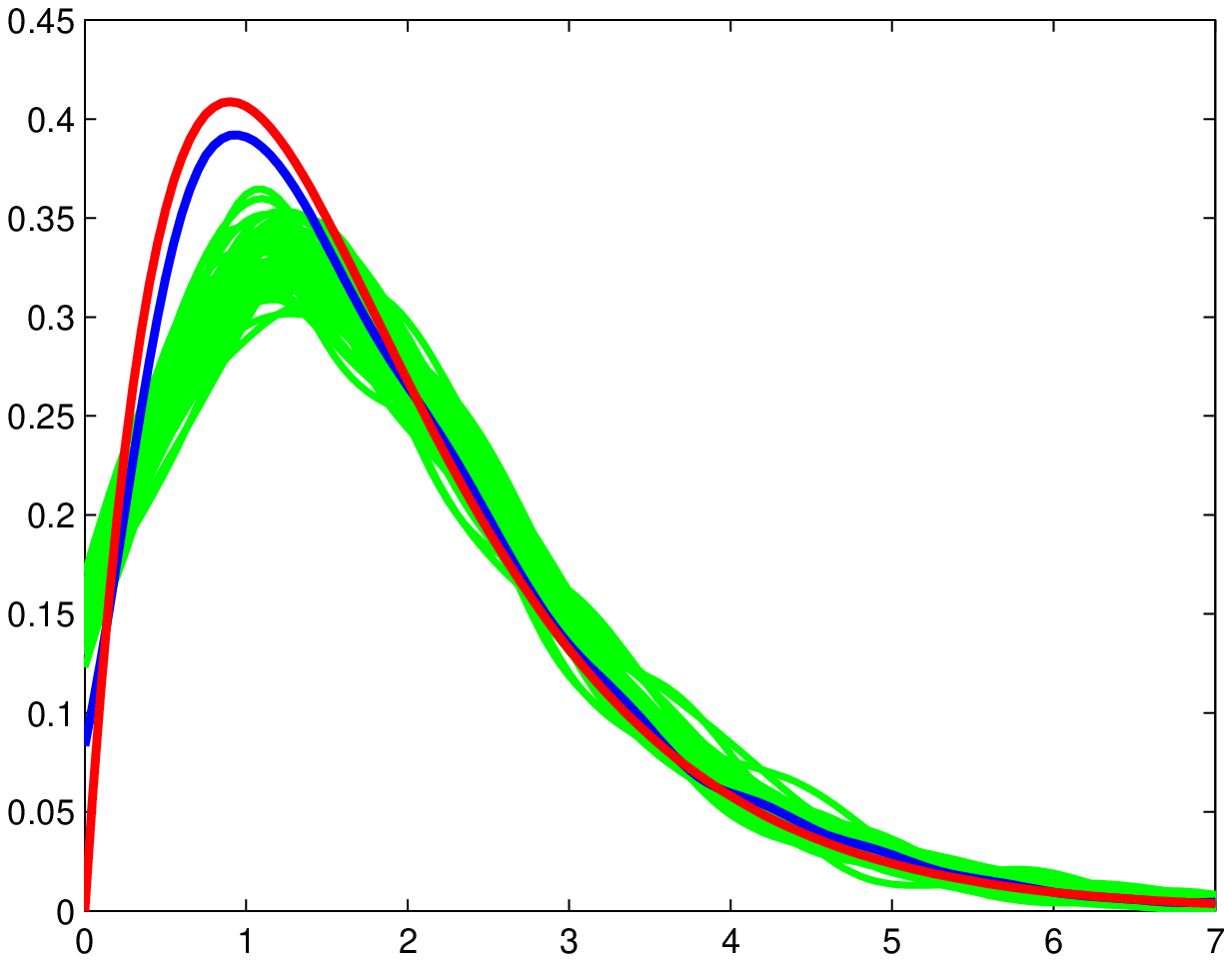}
  & \hspace{-0.7cm}\includegraphics[scale=0.95,totalheight=7cm,width=0.36\textwidth ]{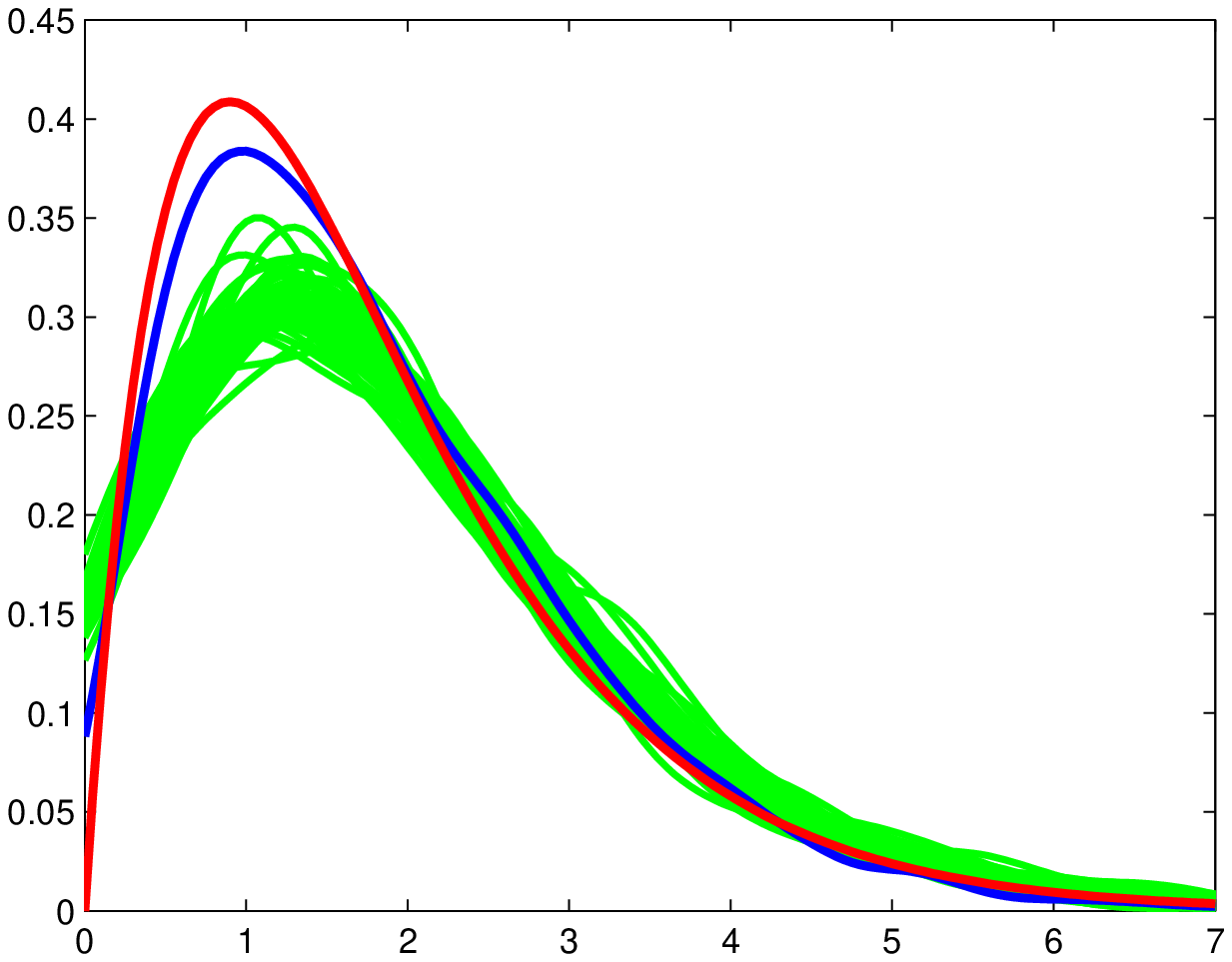}
   \end{tabular}

\caption{{\color{blue}{\bf Kernel estimates $\hat{f}^{(1)}_h$ for Ornstein-Uhlenbeck process with additive random effects:}
We drew 50 i.i.d. realizations of model (\ref{EXX}) for each of the following settings. \emph{First row:} Gaussian distributed random effects, \emph{second row:} gamma distributed random effects, \emph{columns:} different values for the Hurst index~$H$. The thin green lines show the 25 estimated kernel estimates~$\hat{f}^{(1)}_h$. The true density is shown in bold red, and a standard kernel density estimator for one sample of $\phi_i$'s (which is unobserved in a real-case scenario) in blue bold. We chose $N = 1000$ and $T = 10$. For more details, see Section~\ref{Sec.3.7}.}
}
\label{Fig3}
  \end{center}
\end{figure}

\begin{figure}
  \begin{center}
  \begin{tabular}{cccc}
  & {\bf $H=0.25$} & {\bf $H=0.75$} & {\bf $H=0.85$}\\
  \rotatebox{90}{\parbox[c]{67mm}{\hspace*{10mm} \bf Gaussian random effects}}
    &  \includegraphics[scale=0.95,totalheight=7cm,width=0.36\textwidth]{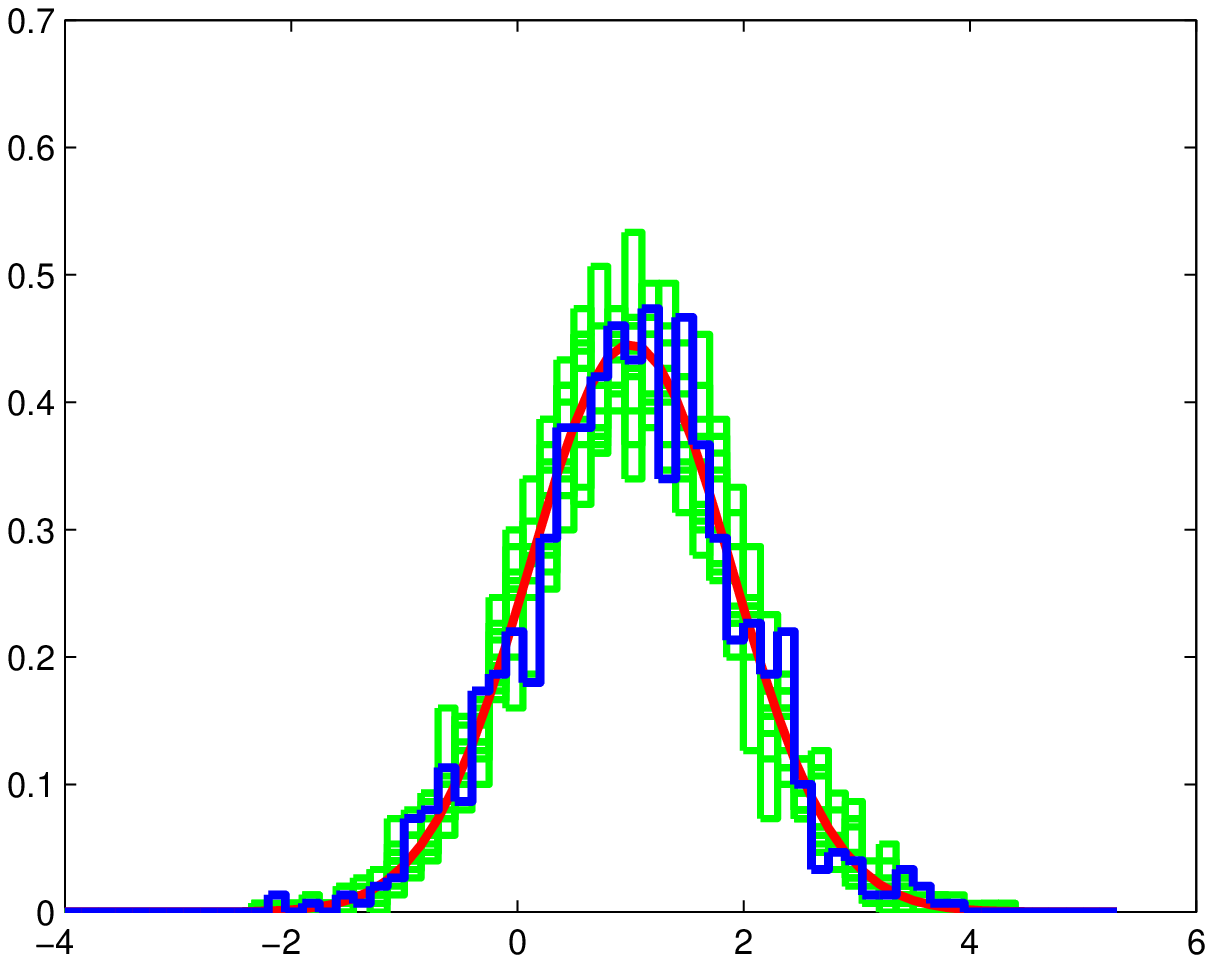}
    & \hspace{-0.7cm} \includegraphics[scale=0.95,totalheight=7cm,width=0.36\textwidth]{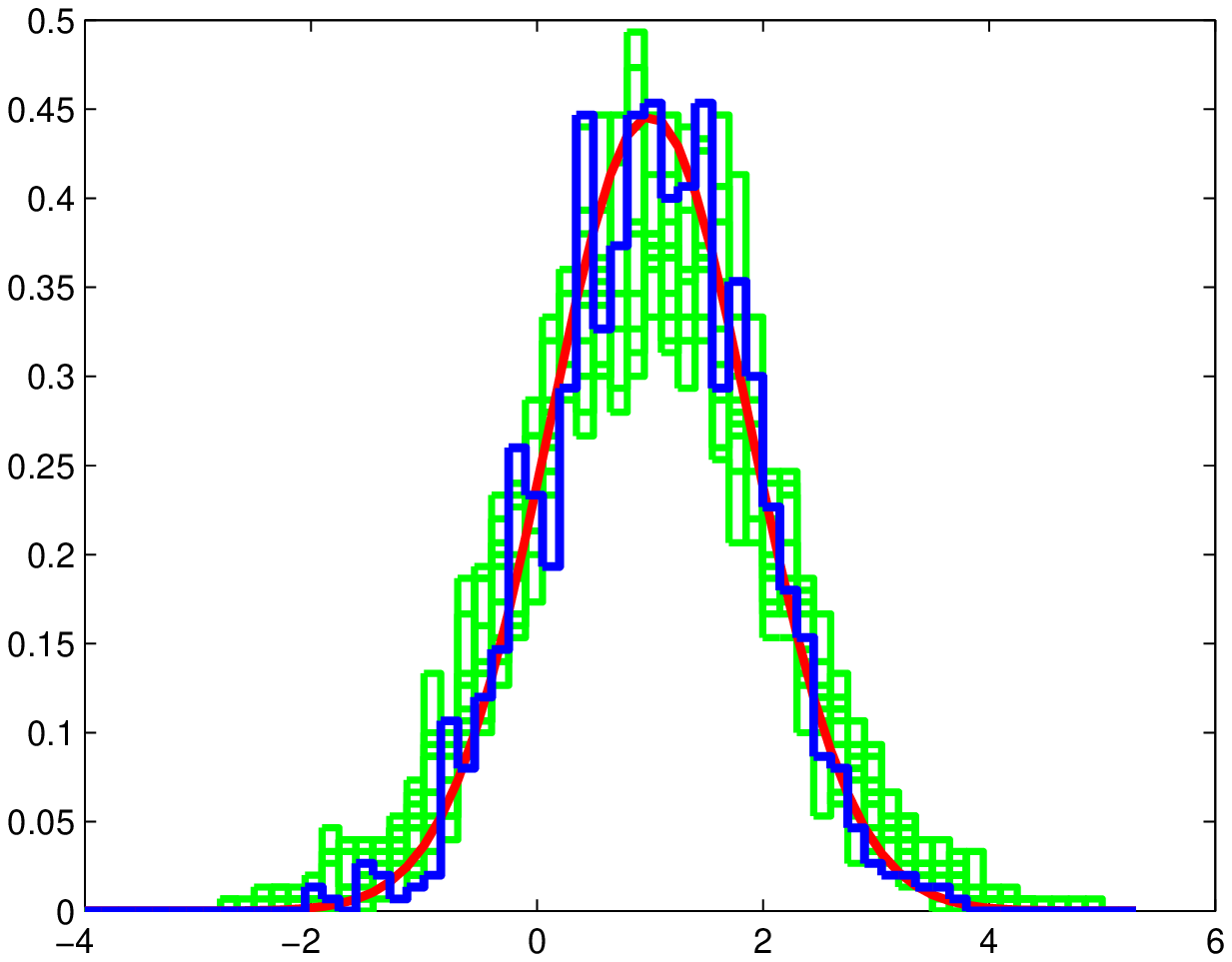}
    & \hspace{-0.7cm} \includegraphics[scale=0.95,totalheight=7cm,width=0.36\textwidth]{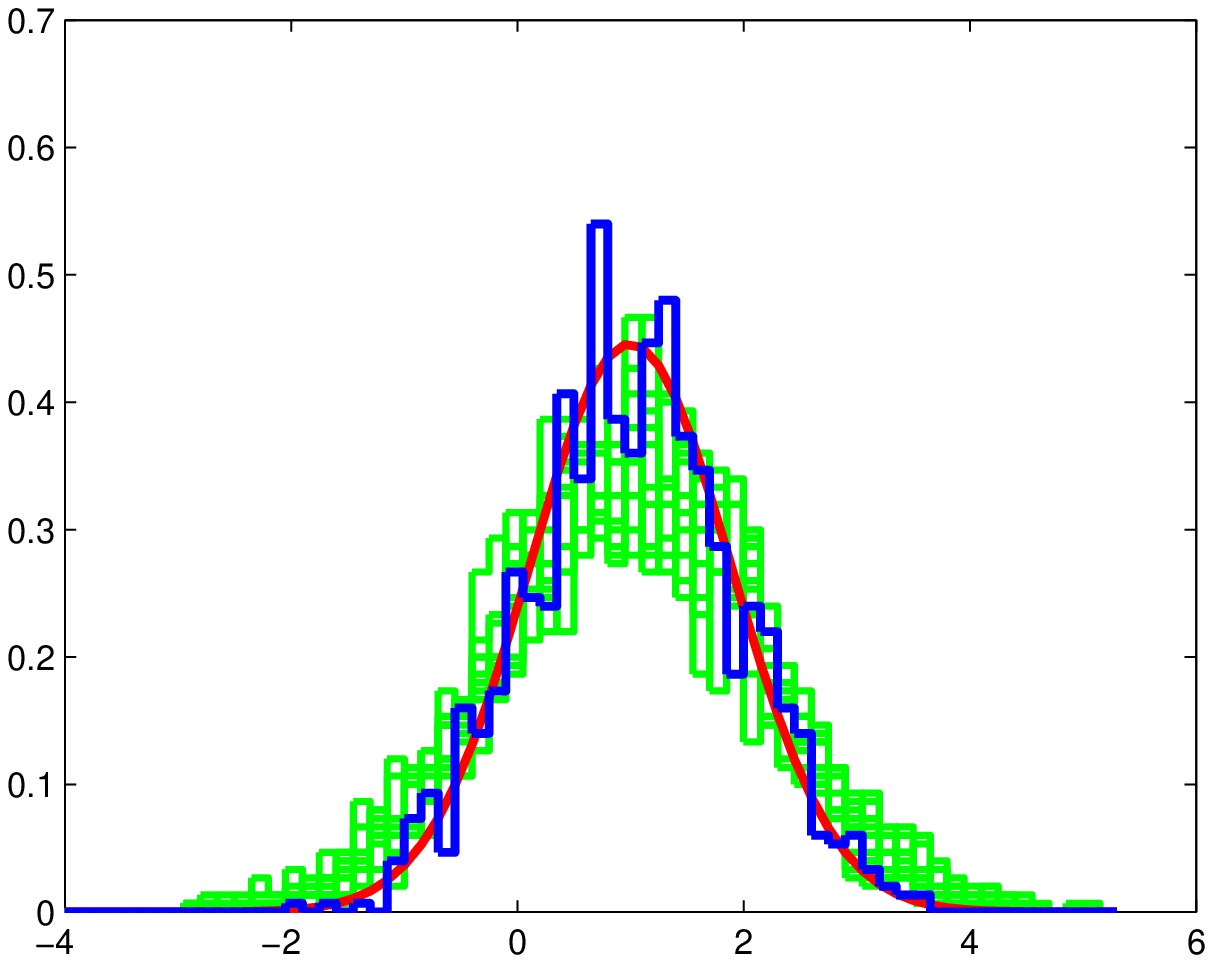}\\
  \rotatebox{90}{\parbox[c]{67mm}{\hspace*{10mm} \bf Gamma random effects}}
  &  \includegraphics[scale=0.95,totalheight=7cm,width=0.36\textwidth]{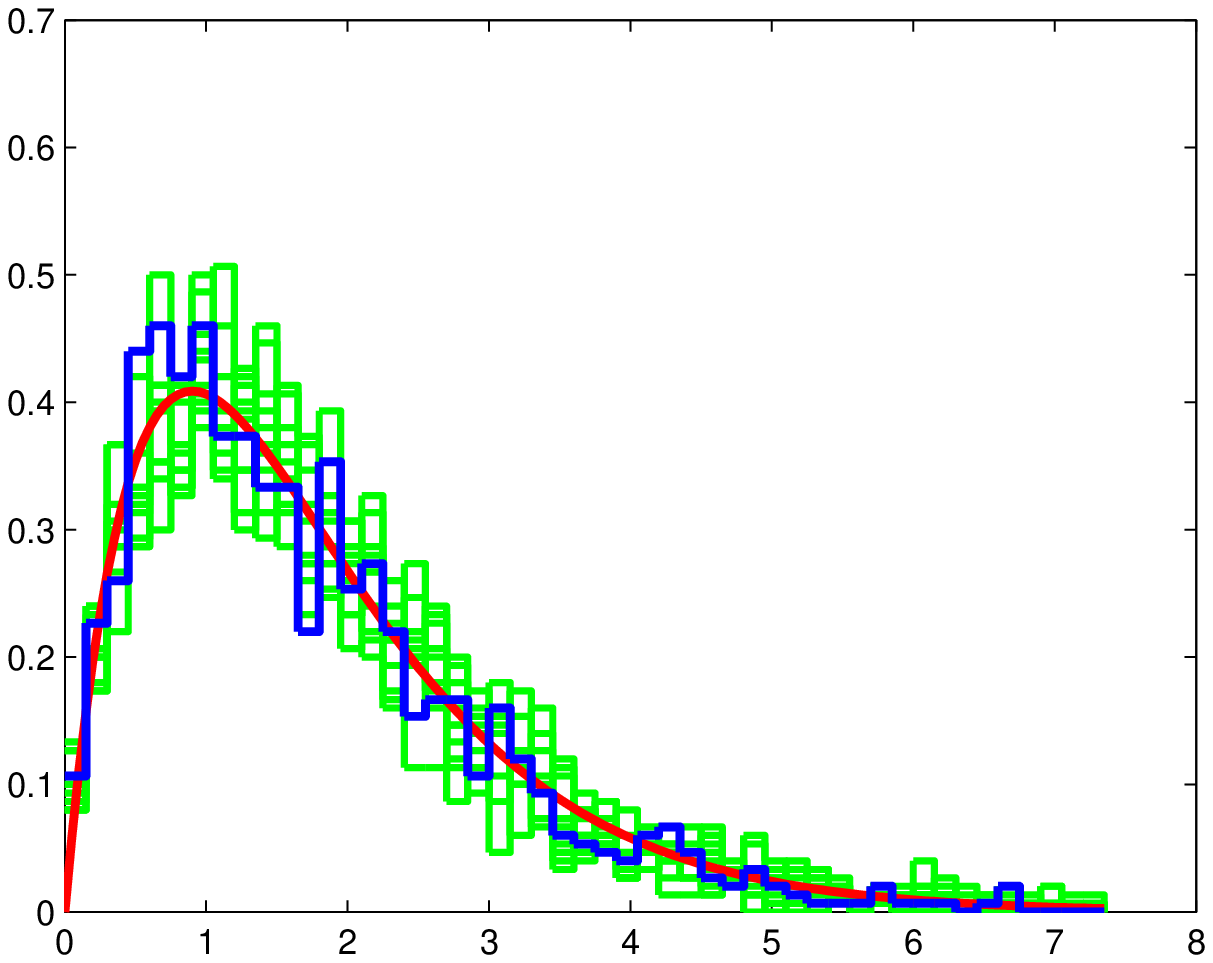}
  & \hspace{-0.7cm}\includegraphics[scale=0.95,totalheight=7cm,width=0.36\textwidth ]{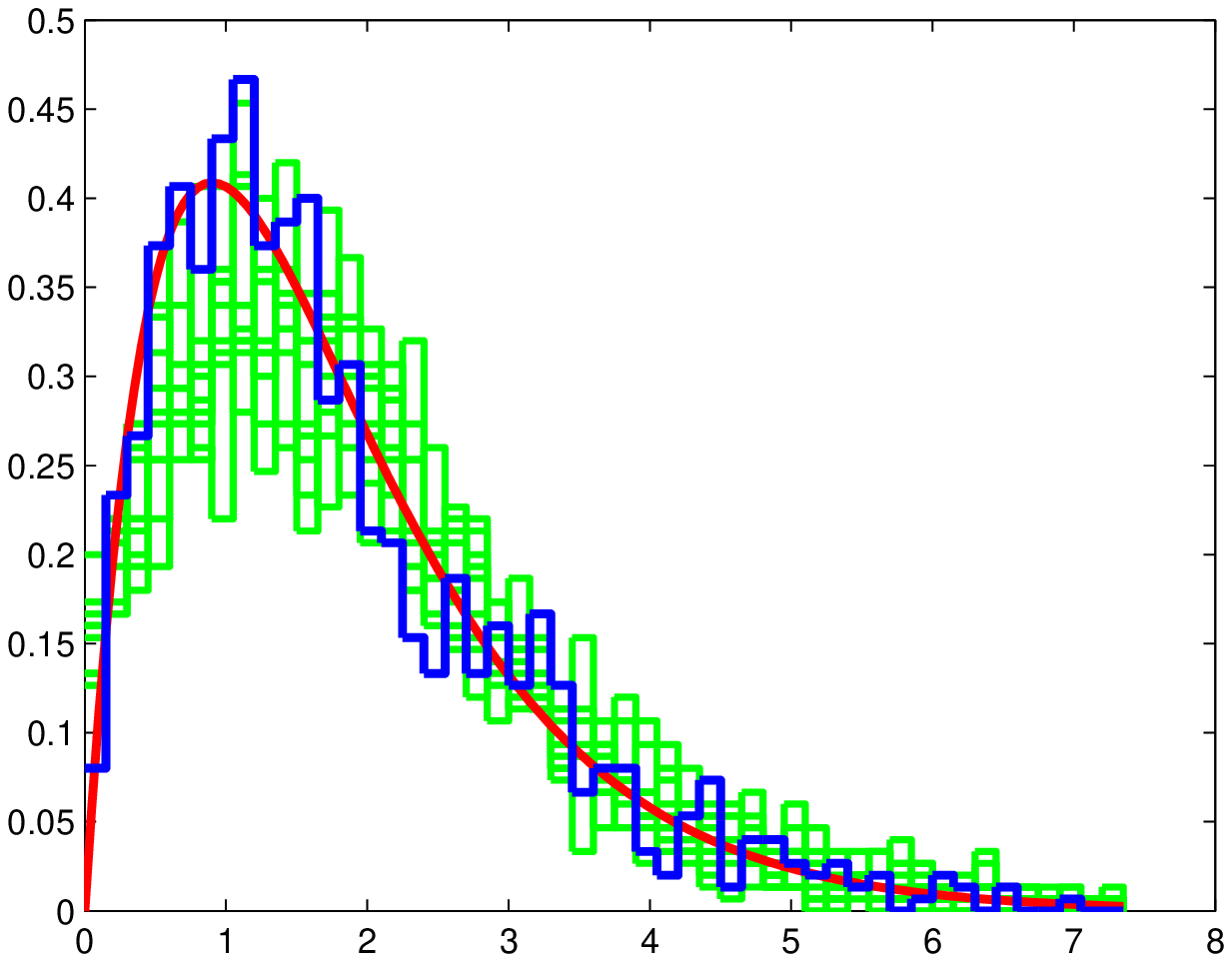}
  & \hspace{-0.7cm}\includegraphics[scale=0.95,totalheight=7cm,width=0.36\textwidth ]{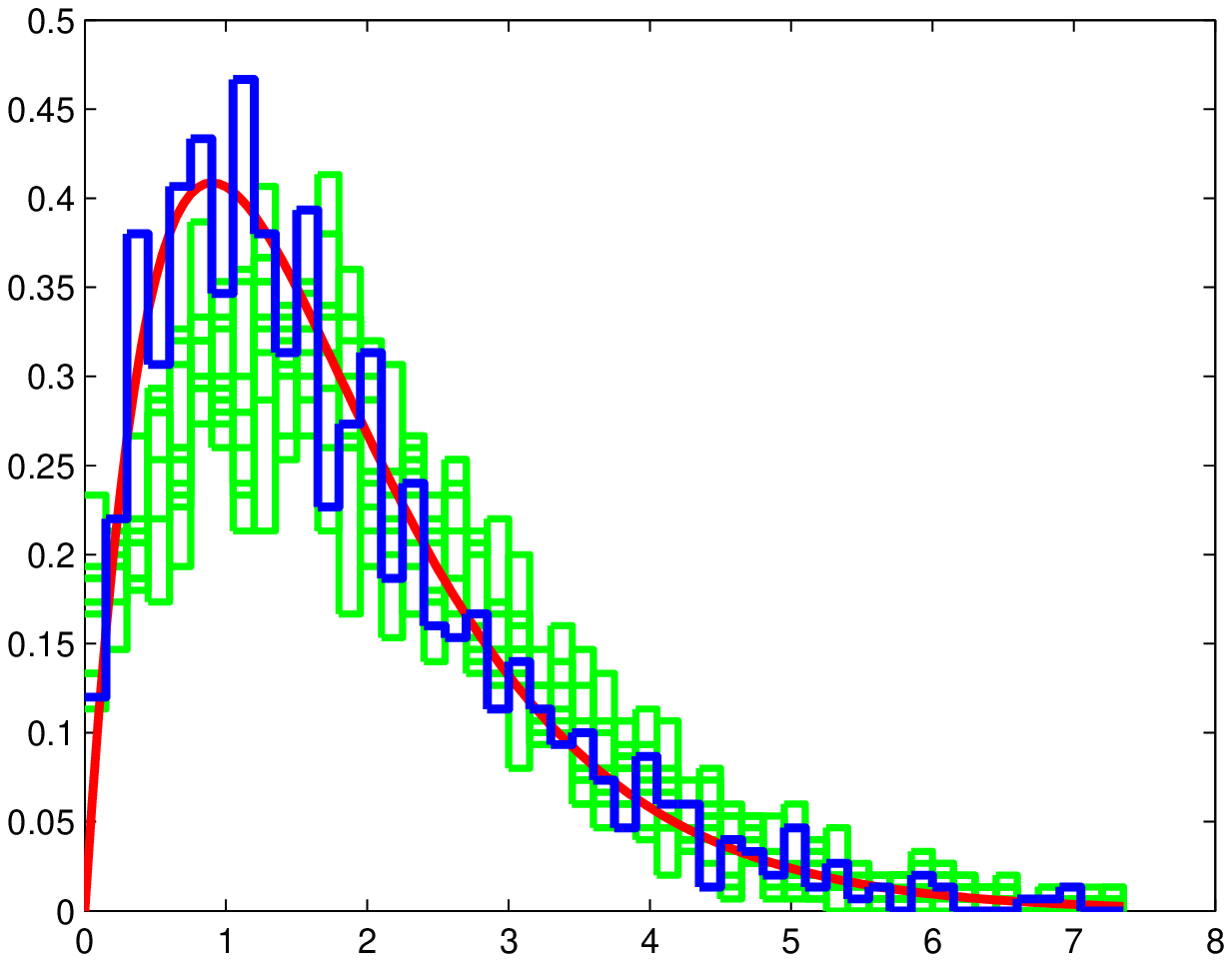}
   \end{tabular}

\caption{{\color{blue}{\bf Histogram estimates $\hat{f}^{(3)}_h$ for Ornstein-Uhlenbeck process with additive random effects:}
We drew 10 i.i.d. realizations of model (\ref{EXX}) for each of the following settings. \emph{First row:} Gaussian distributed random effects, \emph{second row:} gamma distributed random effects, \emph{columns:} different values for the Hurst index~$H$. The thin green lines show the 10 estimated histogram estimates~$\hat{f}^{(3)}_h$. The true density is shown in bold red, and an exact histogram for one sample of $\phi_i$'s (which is unobserved in a real-case scenario) in blue bold. We chose $N = 1000$ and $T = 10$. For more details, see Section~\ref{Sec.3.7}.}
}
\label{Fig4}
  \end{center}
\end{figure}



 \section{Concluding remarks}\label{Sec.4} 
 To summarize, we addressed the open research question of how to estimate the density of random effects in fractional stochastic diferential equations in a nonparametric fashion. To that end, we considered $N$ i.i.d processes $\displaystyle X^i(t),~0\leq t\leq T, i=1,\cdots,N$, where the dynamics of $X^i$ was described by an FSDE including a random effect $\phi_i$. The nonparametric estimation of the density of $\phi_i$ was investigated for a general linear model of the form $\displaystyle dX_t=\left(a(X_t)+\phi b(t) \right)dt+\sigma(t)dW^H_t$, where $b(\cdot)$ and $\sigma(\cdot)$ were known functions, but $a(\cdot)$ was possibly unknown. We studied the asymptotic behavior of the proposed density estimators for the whole range $\displaystyle H\in\left(1/2,1 \right)$, built kernel density estimators and studied their $L^2$-risk as both $N$ and $T$ tended to infinity. We also provided histogram estimators in a specific case where $f$ had compact support, which was for two reasons: First, we aimed to simplify technical computations, and cases where the random effects density $f$ had unbounded support were less important, since data could always be mapped monotonically to $[0,1]$. Second, densities with unbounded support occur less often in practice. For the proposed histogram estimators, we provided their $L^1$-risk for both $N$ and $T=T(N)$ tending to infinity.\\

Several interesting extensions of the present study are possible: A first direction would be to provide density estimators for short-range dependent models, that is $H<1/2$. For such models  one has to change the assumption \textbf{A}$_1$ since it provides
$$\displaystyle\liminf_{T\rightarrow\infty} \mathbb{E}\left(\frac{\int_0^T (b(t)/\sigma(t)) dW^H_t }{\int_0^T (b^2(t)/\sigma^2(t)) dt }\right)^2\geq B_H>0,$$
which in turn implies the non-consistency of the proposed estimators $\widetilde{\phi}_{i,T}$ in $L^2$-sense. Another direction would be to consider models with nonlinear drift. In this case, one has to face the problem of estimating random effects $\phi_i$. Methods of parametric estimation, such as the maximum likelihood technique, may help to estimate these random effects.

\bibliographystyle{elsarticle-num}

 \section*{Appendix A.}
  \begin{lemma}\label{LEM1} For all $c>0$ and $\alpha\in (0,1)$, we have
  \begin{equation*}
  \displaystyle \mathbb{P}\left(\abs{Z_1+Z_2}>c \right)\leq \mathbb{P}\left( \abs{Z_1}>(1-\alpha)c\right)+\mathbb{P}\left( \abs{Z_2}>\alpha c\right),
  \end{equation*}
  where $Z_1$ and $Z_2$ are two random variables.
   \end{lemma}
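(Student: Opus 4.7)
The plan is to prove this by a direct inclusion-of-events argument followed by the union bound. The lemma is essentially a quantitative version of the observation that if a sum is large, at least one summand must carry its fair share of the total.

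First I would fix $c>0$ and $\alpha\in(0,1)$ and show the set-theoretic containment
\begin{equation*}
\{|Z_1+Z_2|>c\} \;\subseteq\; \{|Z_1|>(1-\alpha)c\}\cup\{|Z_2|>\alpha c\}.
\end{equation*}
The key step is the contrapositive: suppose $\omega$ lies in neither event on the right-hand side, so $|Z_1(\omega)|\leq (1-\alpha)c$ and $|Z_2(\omega)|\leq \alpha c$. By the triangle inequality,
\begin{equation*}
|Z_1(\omega)+Z_2(\omega)| \leq |Z_1(\omega)| + |Z_2(\omega)| \leq (1-\alpha)c + \alpha c = c,
\end{equation*}
so $\omega$ is not in the left-hand event either. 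This establishes the containment.

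Once the containment is in hand, the lemma follows immediately by taking $\mathbb{P}$-measure on both sides and applying the (finite) union bound $\mathbb{P}(A\cup B)\leq \mathbb{P}(A)+\mathbb{P}(B)$. No assumption on the joint law of $(Z_1,Z_2)$ is needed (in particular no independence), and no integrability is required. Since every step is elementary, I do not anticipate any genuine obstacle here; the only thing worth double-checking is the direction of the splitting threshold, which matches the application in the proof of Proposition~\ref{PRO4} where $c=h/2$ and one wants to separate the estimation error $|\widehat{\phi}_{i,T}-\phi_i|$ from the distance of $\phi_i$ to the bin center.
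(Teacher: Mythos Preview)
Your argument is correct and complete: the contrapositive via the triangle inequality gives the set inclusion, and the union bound finishes it. The paper actually states Lemma~\ref{LEM1} without proof, treating it as an elementary fact; your proposal supplies exactly the standard one-line justification one would expect, so there is nothing to compare.
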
 \begin{lemma}\label{LEM2} Let $\displaystyle\left( X_n,~ n\geq 0\right)$ be a random sequence that converges weakly to a random variable $X$. Let $A$ be a Borel set such that $\displaystyle \mathbb{P}(X\in A)>0$ and $\displaystyle \mathbb{P}(X\in \delta A)=0$, where $\delta A$ denotes the boundary of the set $A$. For sufficiently large $n$, we have \begin{equation*} \displaystyle \mathbb{E}\abs{\chi_{(X_n\in A)}-\chi_{(X\in A)}} \leq \sqrt{2} \mathbb{P}(X\in A)^{1/2}\displaystyle\left\lbrace\mathbb{P}(X_n\notin A)^{1/2}+\mathbb{P}(X\notin A)^{1/2}\right\rbrace. \end{equation*} \end{lemma}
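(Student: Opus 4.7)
The plan is to decompose the indicator difference into a symmetric-difference event, apply Cauchy--Schwarz to each piece, and then use the weak convergence hypothesis together with the boundary condition $\mathbb{P}(X\in\delta A)=0$ to replace $\mathbb{P}(X_n\in A)$ by $\mathbb{P}(X\in A)$ (up to a factor $2$) for all large $n$.

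More concretely, since indicators are $\{0,1\}$-valued, I would first write
\begin{equation*}
\abs{\chi_{(X_n\in A)}-\chi_{(X\in A)}} \;=\; \chi_{(X_n\in A)}\chi_{(X\notin A)}+\chi_{(X_n\notin A)}\chi_{(X\in A)},
\end{equation*}
so that
\begin{equation*}
\mathbb{E}\abs{\chi_{(X_n\in A)}-\chi_{(X\in A)}} \;=\; \mathbb{P}(X_n\in A,\,X\notin A)+\mathbb{P}(X_n\notin A,\,X\in A).
\end{equation*}
Applying the Cauchy--Schwarz inequality $\mathbb{E}(UV)\leq\sqrt{\mathbb{E}U^{2}\mathbb{E}V^{2}}$ to the product of the two indicators in each term gives
\begin{equation*}
\mathbb{E}\abs{\chi_{(X_n\in A)}-\chi_{(X\in A)}} \;\leq\; \sqrt{\mathbb{P}(X_n\in A)\,\mathbb{P}(X\notin A)}+\sqrt{\mathbb{P}(X\in A)\,\mathbb{P}(X_n\notin A)}.
\end{equation*}

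Next I would invoke the Portmanteau theorem: because $X_n\Rightarrow X$ and $A$ is an $X$-continuity set (which is exactly what $\mathbb{P}(X\in\delta A)=0$ means), one has $\mathbb{P}(X_n\in A)\to\mathbb{P}(X\in A)$ as $n\to\infty$. Since $\mathbb{P}(X\in A)>0$, there exists $n_0$ such that $\mathbb{P}(X_n\in A)\leq 2\,\mathbb{P}(X\in A)$ for every $n\geq n_0$. Substituting this bound into the first term and trivially bounding $\mathbb{P}(X\in A)\leq 2\,\mathbb{P}(X\in A)$ in the second term yields
\begin{equation*}
\mathbb{E}\abs{\chi_{(X_n\in A)}-\chi_{(X\in A)}} \;\leq\; \sqrt{2}\,\mathbb{P}(X\in A)^{1/2}\Bigl\{\mathbb{P}(X\notin A)^{1/2}+\mathbb{P}(X_n\notin A)^{1/2}\Bigr\},
\end{equation*}
which is the desired inequality.

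There is no real obstacle here; the only delicate point is invoking Portmanteau correctly, since the bound on $\mathbb{P}(X_n\in A)$ is what produces the prefactor $\sqrt{2}\,\mathbb{P}(X\in A)^{1/2}$ in front of \emph{both} terms. Everything else is purely algebraic: expansion of the $\{0,1\}$-valued indicator difference and one application of Cauchy--Schwarz per term. The continuity hypothesis on $\partial A$ is used exclusively to guarantee that $\mathbb{P}(X_n\in A)\to\mathbb{P}(X\in A)$, and the qualifier ``for sufficiently large $n$'' in the statement is exactly what allows the replacement $\mathbb{P}(X_n\in A)\leq 2\,\mathbb{P}(X\in A)$.
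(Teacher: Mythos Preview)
Your proof is correct and essentially identical to the paper's: the paper also writes the indicator difference as the symmetric-difference sum $\mathbb{E}\{\chi_{(X\in A)}(1-\chi_{(X_n\in A)})\}+\mathbb{E}\{\chi_{(X_n\in A)}(1-\chi_{(X\in A)})\}$, applies Cauchy--Schwarz to each term, and then invokes the Portmanteau lemma to obtain $\mathbb{P}(X_n\in A)\leq 2\,\mathbb{P}(X\in A)$ for large $n$. Your remark that the $\sqrt{2}$ must multiply both terms (by the trivial bound $\mathbb{P}(X\in A)\leq 2\,\mathbb{P}(X\in A)$ on the second) is exactly right and is implicit in the paper's final step.
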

  \begin{proof}\label{proof.LEM2} Simple computations yield \begin{eqnarray}\label{Q4} \nonumber\displaystyle \mathbb{E}\abs{\chi_{(X_n\in A)}-\chi_{(X\in A)}} &=& \mathbb{E}\left(\chi_{(X_n\in A)}-\chi_{(X\in A)}\right)^2 \\ \nonumber\displaystyle &=& \mathbb{P}\left(X\in A\right)-\mathbb{P}\left(X,X_n\in A\right)+\mathbb{P}\left(X_n\in A\right)-\mathbb{P}\left(X,X_n\in A\right)\\ \nonumber\displaystyle &= & \mathbb{E}\left\lbrace \chi_{(X\in A)}\left(1-\chi_{(X_n\in A)}\right)\right\rbrace+\mathbb{E}\left\lbrace \chi_{(X_n\in A)}\left(1-\chi_{(X\in A)}\right)\right\rbrace\\ \displaystyle &\leq & \left[\mathbb{P}\left(X\in A\right)\mathbb{P}\left(X_n\notin A\right)\right]^{1/2}+\left[\mathbb{P}\left(X_n\in A\right)\mathbb{P}\left(X\notin A\right)\right]^{1/2}\!\!\!\!. \end{eqnarray} (\ref{Q4}) is justified by the Cauchy-Schwarz inequality. Since $\displaystyle\left\lbrace X_n\right\rbrace_{ n\geq 0}$ converges weakly to $X$, then by the Portmanteau lemma (see e.g. \cite{van der Vaart 1998}) we have
  \begin{equation}\label{T1} \displaystyle \mathbb{P}\left(X_n\in A\right)\leq 2\mathbb{P}(X\in A), \end{equation}
  for all $n\geq n_0$, where $n_0$ is sufficiently large. The desired result follows from (\ref{Q4}) and (\ref{T1}). \end{proof} \begin{lemma}\label{LEM3} Let $\displaystyle X_i,~i=1,\cdots,N$, be a sequence of i.i.d random variables with common density $f$. Assume that $f$ is continuous with compact support $A\subset \mathds{R}$. Let $\displaystyle A_j(h)=\left[hj,h(j+1)\right)$, $j=1,\cdots,J$ denote all Borel sets for which $\displaystyle \lambda\left( A_j(h)\cap A\right)\neq 0$. We have \begin{equation*} \displaystyle \lim_{h\rightarrow 0}\frac{1}{N}\sum_{i=1}^{N}\sum_{j=1}^{J}\mathbb{P}\left( X_i\in A_j(h) \right)^{1/2} = 0. \end{equation*} \end{lemma}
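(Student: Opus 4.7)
Since the $X_i$ are i.i.d.\ with common density $f$, the probability $\mathbb{P}(X_i\in A_j(h))$ does not depend on $i$; writing $p_j(h):=\int_{hj}^{h(j+1)} f(t)\,dt$, the average $\frac{1}{N}\sum_i$ is vacuous and it suffices to show $\sum_{j=1}^{J(h)} p_j(h)^{1/2}\to 0$ as $h\to 0$. The hypotheses give two standing facts I would use throughout: $f$ is bounded (set $M:=\norm{f}_\infty<\infty$, so that $p_j(h)\leq Mh$), and the number of relevant bins is $J(h)=\mathcal{O}(1/h)$, with only $\mathcal{O}(1)$ of them intersecting the boundary of $A$.

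My plan is to mimic the interpolation step used in the proof of Proposition~\ref{PRO5}: fix $\delta\in(0,1)$ with $\delta^*:=1-\delta$, write
$$p_j(h)^{1/2}=p_j(h)^{\delta^*/2}\,p_j(h)^{\delta/2}\leq (Mh)^{\delta/2}\,p_j(h)^{\delta^*/2},$$
and sum over $j$ to obtain $\sum_j p_j(h)^{1/2}\leq (Mh)^{\delta/2}\sum_j p_j(h)^{\delta^*/2}$. I would then split the remaining sum: the $\mathcal{O}(1)$ bins touching the boundary of $A$ contribute at most $\mathcal{O}(h^{\delta^*/2})$ trivially, while on the bins entirely inside $A$ the integral mean value theorem (applied to the continuous $f$) gives $p_j(h)=h\,f(\xi_j)$ for some $\xi_j\in A_j(h)$, reducing that piece to a Riemann-sum approximation of $\int_A f^{\delta^*/2}$ rescaled by appropriate powers of $h$.

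The main obstacle is balancing the shrinking prefactor $(Mh)^{\delta/2}$ against the residual sum over $J(h)\sim 1/h$ bins. A naive Cauchy--Schwarz estimate gives only $\sum_j p_j(h)^{1/2}\leq \sqrt{J(h)}=\mathcal{O}(h^{-1/2})$, which is too weak, and the direct Riemann-sum analysis of the bulk yields $\sum_j p_j(h)^{\delta^*/2}\asymp h^{\delta^*/2-1}\int_A f^{\delta^*/2}$, so the combined bound is borderline. Closing the argument therefore relies on exploiting the continuity of $f$ near the boundary of $A$ (where it necessarily vanishes) to sharpen the bulk estimate, and on tuning $\delta$ (the range $\delta\in(1/3,1/2)$ used in Proposition~\ref{PRO5} is the natural target), so that the final rate becomes $\mathcal{O}(h^\delta)$, in agreement with equation~(\ref{Q1}).
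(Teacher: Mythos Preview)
Your proposal has a genuine gap that cannot be closed, because the statement as written is false. Take any continuous density $f$ supported on $[0,1]$ with $f\geq c>0$ on $[1/4,3/4]$. Then at least $\lfloor 1/(2h)\rfloor-1$ bins $A_j(h)$ lie entirely inside $[1/4,3/4]$, each with $p_j(h)\geq ch$, so
\[
\sum_{j=1}^{J(h)} p_j(h)^{1/2}\ \geq\ \bigl(\lfloor 1/(2h)\rfloor-1\bigr)\sqrt{ch}\ \longrightarrow\ \infty\quad(h\to 0).
\]
Your own Riemann-sum computation already detects this: after the interpolation $p_j^{1/2}\leq (Mh)^{\delta/2}p_j^{\delta^*/2}$ and the bulk estimate $\sum_j p_j^{\delta^*/2}\asymp h^{\delta^*/2-1}\int_A f^{\delta^*/2}$, the combined exponent of $h$ is $\tfrac{\delta}{2}+\tfrac{\delta^*}{2}-1=-\tfrac12$, \emph{independently} of $\delta$. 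So the bound is not ``borderline''---it diverges at rate $h^{-1/2}$, and no tuning of $\delta$ can rescue it.

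The fix you propose---exploiting that $f$ vanishes near $\partial A$---addresses the wrong region. The divergence is generated by the $\asymp 1/h$ bulk bins where $f$ is bounded away from zero; the $O(1)$ boundary bins are harmless. For the same reason, the paper's own argument is defective: the step ``there exists $h_0>0$ such that $\mathbb{P}(X_i\in A_j(h))<\varepsilon^2/j^4$ for all $j$ and $h<h_0$'' does not follow from $\sup_j p_j(h)\leq Mh\to 0$, and fails for the example above (where $p_j(h)\geq ch$ on $\asymp 1/h$ indices, so certainly $p_j(h)\not<\varepsilon^2/j^4$ for $j$ near $1/h$). The lemma can only be salvaged by an additional hypothesis forcing either $J(h)$ to remain bounded or $p_j(h)$ to decay summably in $j$; neither follows from continuity and compact support of $f$ alone.
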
 \begin{proof}\label{proof.LEM1} Actually, \begin{eqnarray*} \displaystyle \sup_{i,j}\mathbb{P}\left( X_i\in A_j(h) \right) &=& \sup_{j}\int_{hj}^{h(j+1)} f(t)dt\\ \displaystyle &\leq & \sup_{t}f(t)h\rightarrow 0~\mbox{as}~h\rightarrow 0. \end{eqnarray*}
   Let $\varepsilon>0$. There exists $h_0>0$ such that $\displaystyle \mathbb{P}\left( X_i\in A_j(h) \right)<\varepsilon^2/j^4$, for all $i$ and $h\in (0,h_0)$. Hence \begin{equation*} \frac{1}{N}\sum_{i=1}^{N}\sum_{j=1}^{J}\mathbb{P}\left( X_i\in A_j(h) \right)^{1/2} \leq \varepsilon \left(\sum_{j\geq 1}\frac{1}{j^2} \right)<\infty. \end{equation*} \end{proof}

\end{document}